\newcommand{\M}{\mathcal{M}}
\newcommand{\Bs}{\mathcal{B}}
\newcommand{\R}{\mathbb{R}}
\newcommand{\re}{\mathbb{R}}
\newcommand{\dd}{\mathcal{D}}
\newcommand{\loc}{{\rm loc}}
\newcommand{\bmo}{{\rm BMO}}
\newcommand{\dya}{{\rm dyadic}}
\def\mvint_#1{\mathchoice%
          {\mathop{\kern 0.2em\vrule width 0.6em height 0.69678ex depth -0.58065ex
                  \kern -0.8em \intop}\nolimits_{\kern -0.4em#1}}%
          {\mathop{\kern 0.1em\vrule width 0.5em height 0.69678ex depth -0.60387ex
                  \kern -0.6em \intop}\nolimits_{#1}}%
          {\mathop{\kern 0.1em\vrule width 0.5em height 0.69678ex depth -0.60387ex
                  \kern -0.6em \intop}\nolimits_{#1}}%
          {\mathop{\kern 0.1em\vrule width 0.5em height 0.69678ex depth -0.60387ex
                  \kern -0.6em \intop}\nolimits_{#1}}}
\def\aver_#1{\mvint_{#1}}
\theoremstyle{plain}
\newtheorem{theorem}[equation]{Theorem}
\newtheorem{lemma}[equation]{Lemma}
\newtheorem{corollary}[equation]{Corollary}
\theoremstyle{definition}
\newtheorem{definition}[equation]{Definition}
\newtheorem{example}{Example}
\theoremstyle{remark}
\newtheorem{remark}[equation]{Remark}
\numberwithin{equation}{section}
\begin{document}

\allowdisplaybreaks

\title[Oscillation, self-improving and  good-$\lambda$ inequalities]{Oscillation estimates, self-improving results and good-$\lambda$ inequalities}

\author{Lauri Berkovits}

\address{Lauri Berkovits
\\
University of Oulu
\\
Department of Mathematical Sciences
\\
P.O. Box 3000
\\FI-90014 Oulu, Finland}

\email{lauri.berkovits@oulu.fi}

\author{Juha Kinnunen}

\address{Juha Kinnunen
\\
Aalto University
\\
Department of Mathematics
\\
P.O. Box 11100
\\FI-00076 Aalto, Finland}
\email{juha.k.kinnunen@aalto.fi}

\author{Jos\'e Mar{\'\i}a Martell}

\address{Jos\'e Mar{\'\i}a Martell
\\
Instituto de Ciencias Matem\'aticas CSIC-UAM-UC3M-UCM
\\
Consejo Superior de Investigaciones Cient{\'\i}ficas
\\
C/ Nicol\'as Cabrera, 13-15
\\
E-28049 Madrid, Spain} \email{chema.martell@icmat.es}

\thanks{The first author is grateful for the hospitality of ICMAT in Madrid, Spain, where
parts of this research was conducted. The visit was supported by the
Finnish Academy of Science and Letters, Vilho, Yrj\"o and Kalle V\"ais\"al\"a
foundation. The second author was supported by the Academy of Finland.
The third author was supported by ICMAT Severo Ochoa project SEV-2011-0087. He also acknowledges that the research leading to these results has received funding from the European Research Council under the European Union's Seventh Framework Programme (FP7/2007-2013)/ ERC agreement no. 615112 HAPDEGMT}

\date{July 9, 2015. \textit{Revised}\textup{:} \today}

\subjclass[2010]{42B25, 42B35}

\keywords{Poincar\'e inequality, doubling measure, John-Nirenberg inequality, functions of bounded mean oscillation, Calder\'on-Zygmund decomposition, good-$\lambda$ inequality, Gurov-Reshetnyak condition}

\begin{abstract}
Our main result is an abstract good-$\lambda$ inequality that allows us to consider three self-improving properties related to oscillation estimates in a very general context. The novelty of our approach is that there is one principle behind these self-improving phenomena. First, we obtain higher integrability properties for functions belonging to the so-called John-Nirenberg spaces. Second, and as a consequence of the previous fact, we present very easy proofs of some of the self-improving properties of the generalized Poincar\'e inequalities studied by  B. Franchi, C. P\'erez and R. Wheeden in \cite{FPW}, and by P. MacManus and C. P\'erez in \cite{MaPe}. Finally, we show that a weak Gurov-Reshetnyak condition implies higher integrability with asymptotically sharp estimates.  We discuss these questions both in Euclidean spaces with dyadic cubes and in spaces of homogeneous type with metric balls. We develop new techniques that apply to more general oscillations than the standard mean oscillation and to overlapping balls instead of dyadic cubes.
\end{abstract}

\maketitle

{\parskip=0cm \small\tableofcontents}

\section{Motivation: self-improving phenomena}

It is well-known that the Sobolev-Poincar\'e inequality
\begin{equation}\label{Poincare:1,p}
\aver_Q|f(x)-f_Q|\,dx
\lesssim
\ell(Q)\left(\aver_{Q}|\nabla f(x)|^p\,dx\right)^{1/p}
\end{equation}
encodes a self-improvement in the local integrability of $f$. Indeed, the
previous estimate is meaningful provided $f\in L^1_\loc(\re^n)$
and $\nabla f\in L^p_\loc(\re^n)$ and it implies
\begin{equation}\label{Poincare:p*,p}
\left(\aver_Q|f(x)-f_Q|^{p^*}\,dx\right)^{1/p^*}
\lesssim
\ell(Q)\left(\aver_{Q}|\nabla f(x)|^p\,dx\right)^{1/p}
\end{equation}
with $p^*=pn/(n-p)$ for $1 \leq p < n$. If $p\ge n$ we obtain a similar estimate
for any $p^* \in (1,\infty)$. Here $f_Q$ and the barred integral sign both denote
the integral average and $\ell(Q)$ stands for the side length of a cube $Q$.
Denoting the right-hand side of \eqref{Poincare:1,p} by $a(Q)$, the
inequality may be rewritten as
\begin{equation}\label{Poincare:gener}
\aver_Q|f(x)-f_Q|\,dx
\le
a(Q).
\end{equation}
In general, we may study generalized Poincar\'e inequalities of the
form \eqref{Poincare:gener} with respect to an abstract functional
$a$ acting on cubes. The inequality
\eqref{Poincare:1,p} above is one of the most relevant examples, but
inequalities involving controlled oscillation appear frequently both in the Euclidean and non-Euclidean setting.
For instance, the
Sobolev-Poincar\'e inequality has an analogue in metric measure spaces
(defined in terms of the so-called upper gradients) which
has become a standard tool in the field, see \cite{Hein01}.

A unified approach to the subject
was first developed in \cite{FPW}, in the context of spaces of homogeneous type.
They introduced a discrete summability condition $D_p$, which in the dyadic
setting takes the following form. Given a cube $Q_0$, an exponent $p$ with $1<p<\infty$, and a
functional $a:\dd(Q_0)\rightarrow [0,\infty)$  ---here and elsewhere we will write $\dd(Q_0)$ to denote the family of
dyadic subcubes of $Q_0$--- we say that $a\in D_p^\dya(Q_0)$,
if there exists a constant $\|a\|$ such that for every $Q\in\dd(Q_0)$, we have
$$
\sum_{i} a(Q_i)^p|Q_i|
\le
\|a\|^p a(Q)^p|Q|,
$$
whenever $\{Q_i\}_i\subset \dd(Q)$ is a pairwise disjoint family.
It was shown in \cite{FPW} that if
\eqref{Poincare:gener} holds for all $Q \in \dd(Q_0)$ with
$a\in D_p^\dya(Q_0)$, then
\begin{equation}\label{Poincare:gener_self-imp:weak}
\|f-f_Q\|_{L^{p,\infty}, Q}
\lesssim \|a\|a(Q)
\end{equation}
for every $Q\in\dd(Q_0)$.
In the previous expression we have used the following notation:
given a Banach function space $\mathbb{X}$ (e.g., $L^p$, $L^{p,\infty}$, etc.)
and a cube $Q$, we write $\|f\|_{X,Q}=\|f\|_{X(Q,dx/Q)}$. Note
that \eqref{Poincare:gener_self-imp:weak} and Kolmogorov's inequality
imply
\begin{equation}\label{Poincare:gener_self-imp}
\left(\aver_Q|f(x)-f_Q|^q\,dx\right)^{1/q}
\lesssim \|a\|a(Q)
\end{equation}
for every $1\le q<p$.
Thus,  we have again a self-improvement phenomenon: a priori we only
have $f\in L^1_\loc(\re)$ and a posteriori we get $f\in L^q_\loc(\re)$
for every $1 \leq q<p$. The results in \cite{FPW} were extended
and improved in \cite{LePe, MaPe} and we will further generalize them.

Another, apparently different, self-improvement takes
place for the functions belonging to the John-Nirenberg
spaces which are defined as follows. Given $f\in L^1(Q_0)$ and $1<p<\infty$, we say that $f\in JN_p^\dya(Q_0)$
provided
\begin{align*}%\label{JNp}
\|f\|_{JN_p^\dya(Q_0)}
:&
=
\sup_{Q\in\dd(Q_0)} \|f\|_{JN_p^\dya,Q}<\infty,
\end{align*}
where
\begin{align*}%\label{JNp}
\|f\|_{JN_p^\dya,Q}
:&=
\sup \left(\frac1{|Q|}\sum_i \left(\aver_{Q_i} |f(x)-f_{Q_i}|\,dx\right)^{p}|Q_i|\right)^{1/p},
\end{align*}
and  the supremum is taken over all pairwise disjoint subfamilies $\{Q_i\}_i$ of $\dd(Q)$.
These spaces first appeared in the celebrated paper of F. John and
L. Nirenberg \cite{JoNi} and the space $\bmo$ can be seen as the
limit case of $JN_p$ as $p \to \infty$, see also \cite{Camp,Gius,GiMa}.
It was shown in \cite{JoNi} that
the space $JN_p(Q_0)$ embeds into $L^{p,\infty}(Q_0)$, which again amounts to improvement in the order of integrability of $f$.
We shall show that $JN_p$ spaces and generalized Poincar\'e inequalities
are closely connected. In particular, the embedding
$JN_p(Q_0) \hookrightarrow L^{p,\infty}(Q_0)$ easily implies
some of the known self-improvement results for generalized Poincar\'e
inequalities, including \eqref{Poincare:gener_self-imp:weak}.

The last example of self-improvement that we consider is given by the Gurov-Reshet\-nyak condition, first introduced in the context of
quasiconformal mappings, see \cite{Guro1, GuRe, Re2}.
For a non-negative function $w\in L^1(Q_0)$ (called a weight), we write
$w\in GR_\varepsilon^\dya(Q_0)$, where $0<\varepsilon<2$, if
\begin{equation}\label{GR-intro-eq}
\aver_Q|w(x)-w_Q|\,dx
\le
\varepsilon w_Q
\end{equation}
for every $Q\in\dd(Q_0)$.
This condition implies that $w\in L^{p_{\varepsilon}}(Q_0)$ for some $p_{\varepsilon}>1$,
see \cite{AB, Boja1,Boja2, Isan1, Isan2, Iw1, KoLeSt, Re1, Re2}.
The main point of interest here is that  $p_{\varepsilon}\to +\infty$ as  $\varepsilon \to 0^+$.
 While \eqref{GR-intro-eq} is of the form \eqref{Poincare:gener}, the results
in \cite{FPW,LePe,MaPe} do not provide any non-trivial information about the class
$GR_\varepsilon^\dya(Q_0)$. This is because $a(Q)=\varepsilon w_Q$
 only satisfies $D_p$ with $p=1$ (see \cite[p. 3]{MaPe}).
However, our approach applies to Gurov-Reshetnyak weights as well.

The novelty of our approach is to show that there is one principle behind these
self-improvement phenomena: they all
(and much more) can be derived from a single abstract good-$\lambda$ inequality, which is a refined local version of the two-parameter good-$\lambda$ inequalities considered \cite{AM}.

In Part \ref{part:I} we consider the dyadic (and local) case, and the related good-$\lambda$ inequality is contained in  Theorem \ref{theor:good-lambda:dyadic}.
Our first set of applications (see Section \ref{section:App-I}) contains the examples of self-improving estimates pointed out above. We first obtain an embedding of the John-Nirenberg space into the corresponding weak Lebesgue space. Second we show how this embedding easily gives some of the
Franchi-P\'erez-Wheeden self-improvements in \cite{FPW, MaPe}. Finally, we frame the Gurov-Reshet\-nyak condition into our good-$\lambda$ inequality to obtain the asymptotic higher integrability. We would like to emphasize that these applications are straightforward once the good-$\lambda$ result is available.

Another important feature of our good-$\lambda$ inequality is that we can consider different oscillations, that is, $|f(x)-f_Q|$ may be replaced by
$|f(x)-A_Q f(x)|$, where $A_Q$ is a local operator. In Section \ref{section:App-II} we elaborate on this and obtain
self-improvements for new John-Nirenberg, Franchi-P\'erez-Wheeden and Gurov-Reshetnyak conditions written in terms of these local oscillations.

In Part \ref{part:II} we consider the corresponding problems but in the setting of spaces of homogeneous type, that is, in metric spaces endowed with a doubling measure. We obtain a local good-$\lambda$ inequality (see Section \ref{section:good-lambda:X}), which is applied to the self-improving properties. We consider more general John-Nirenberg, Franchi-P\'erez-Wheeden and Gurov-Reshetnyak conditions which are natural when working with the metric balls. We would like to emphasize that in contrast with Part \ref{part:I}, where cubes can be nicely decomposed as a union of non-overlapping cubes, in Part \ref{part:II}, coverings are made with balls. This creates both overlap and ``increases the support'' (that is, instead of working in a given ball $B$ we have to consider the dilated ball $(1+\gamma)\,B$).

Good-$\lambda$ inequalities typically lead to weighted and unweighted estimates. In this paper we will only consider unweighted estimates for the sake of conciseness. The corresponding weighted norm inequalities with Muckenhoupt weights will be treated elsewhere.

\part{The Euclidean setting: dyadic cubes}\label{part:I}

\section{The good-$\lambda$ inequality}
The main result in this section is an abstract local good-$\lambda$ inequality written in terms of dyadic cubes. To set the stage, we fix  a cube $Q_0\subset\re^n$. We recall that $\dd(Q_0)$ stands for the set of dyadic subcubes of $Q_0$. If $Q \in \dd(Q_0)\setminus\{Q_0\}$ we write $\widehat{Q}$ for the dyadic parent of $Q$, that is, the unique $\widehat{Q}\in\dd(Q_0)$ with  side length $\ell(\widehat{Q})=2\ell(Q)$. Let $\M_{Q_0}$ denote the local dyadic maximal operator
$$
\M_{Q_0} f(x)
=
\sup_{x\in Q\in\dd(Q_0)} \aver_{Q} |f(y)|\,dy.
$$

We are now ready to state our good-$\lambda$ inequality which is a refined local version of the two-parameter good-$\lambda$ inequalities considered in \cite{AM}. The proof is postponed until Section \ref{section:proof:good-lambda-Q}.

\begin{theorem}\label{theor:good-lambda:dyadic}
Fix a cube $Q_0\subset\re^n$ and let $0\le F\in L^1(Q_0)$.
Assume that there are constants $\Theta\ge 1$ and  $0\le\delta<2^{-1}$ such that for every $Q \in \dd(Q_0)\setminus\{Q_0\}$ there exist non-negative functions $H^Q$, $G^Q$ and
a constant $g^Q\ge 0$ satisfying
\begin{enumerate}\itemsep=0.3cm
\item[(i)] $F(x) \leq G^Q(x)+H^Q(x)$ for a.e.~$x \in Q$,

\item[(ii)]  $\displaystyle\| H^Q \|_{L^\infty(Q)} \leq \Theta \mvint_{\widehat{Q}} F(x)\,dx$, 

\item[(iii)] $\displaystyle\mvint_Q G^Q(x)\, dx \leq \delta \mvint_{\widehat{Q}} F(x)\,dx + g^Q$.
\end{enumerate}
Define
$$
G_{Q_0}^*(x):
=\sup_{x\in Q\in\dd(Q_0)} g^Q.
$$
Given $\lambda\ge\aver_{Q_0} F(x)\,dx$, for  every $K>\Theta$ and $0<\gamma<1$ we have
\begin{equation}\label{eqn:good-lambda}
\big|\{x\in Q_0: \M_{Q_0} F(x)> K\lambda, G_{Q_0}^*(x)\le\lambda\gamma \}\big|
\le
\frac{\delta+\gamma}{K-\Theta}\big|\{x\in Q_0: \M_{Q_0} F(x)> \lambda\}\big|.
\end{equation}
Let $1< p<1+\frac{\log(1/(2\,\delta))}{\,\log(2\,\Theta)}$ \textup{(}notice that if $\delta=0$ we can take any $p>1$\textup{)}, then
\begin{equation}\label{eqn:good-lambda:weak-Lp}
\|F\|_{L^{p,\infty}, Q_0}
\le
\|\M_{Q_0} F\|_{L^{p,\infty}, Q_0}
\le C_{p,\Theta,\delta}\|G_{Q_0}^*\|_{L^{p,\infty}, Q_0}+ C_{p,\Theta,\delta}\mvint_{Q_0} F(x)\,dx
\end{equation}
and
\begin{equation}\label{eqn:good-lambda:Lp}
\|F\|_{L^{p}, Q_0}
\le
\|\M_{Q_0} F\|_{L^{p}, Q_0}
\le C_{p,\Theta,\delta}\|G_{Q_0}^*\|_{L^{p}, Q_0}+ C_{p,\Theta,\delta}\mvint_{Q_0} F(x)\,dx.
\end{equation}
\end{theorem}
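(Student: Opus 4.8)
The plan is to run a Calder\'on--Zygmund stopping-time argument for $\M_{Q_0}F$ at height $\lambda$. Since $\lambda\ge\aver_{Q_0}F(x)\,dx$, the cube $Q_0$ is not one of the dyadic cubes on which the $F$-average exceeds $\lambda$, so the set $\Omega_\lambda:=\{x\in Q_0:\M_{Q_0}F(x)>\lambda\}$ is the disjoint union $\bigcup_j Q_j$ of the \emph{maximal} cubes $Q_j\in\dd(Q_0)\setminus\{Q_0\}$ with $\aver_{Q_j}F(x)\,dx>\lambda$; by maximality, every dyadic cube strictly containing $Q_j$ --- in particular its parent $\widehat{Q_j}$ --- has $F$-average at most $\lambda$. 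Two elementary localization remarks will be used throughout. First, $\{x\in Q_0:\M_{Q_0}F(x)>K\lambda\}\subset\Omega_\lambda$, because $K>\Theta\ge1$. Second, for $x\in Q_j$ the averages of $F$ over dyadic cubes that strictly contain $Q_j$ are all $\le\lambda<\aver_{Q_j}F(x)\,dx\le\M_{Q_j}F(x)$, so in fact $\M_{Q_0}F(x)=\M_{Q_j}F(x)$ for every $x\in Q_j$.

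Fix $K>\Theta$ and $0<\gamma<1$. By the first remark it suffices to estimate, for each $j$, the set $E_j:=\{x\in Q_j:\M_{Q_j}F(x)>K\lambda,\ G_{Q_0}^*(x)\le\gamma\lambda\}$. If no point of $Q_j$ satisfies $G_{Q_0}^*\le\gamma\lambda$ then $E_j=\emptyset$; otherwise, since $g^{Q_j}\le G_{Q_0}^*$ pointwise on $Q_j$ (as $Q_j\in\dd(Q_0)$ contains that point), we get $g^{Q_j}\le\gamma\lambda$. Apply hypotheses (i)--(iii) with $Q=Q_j$: on $Q_j$ we have $F\le G^{Q_j}+H^{Q_j}$, where by (ii) $\|H^{Q_j}\|_{L^\infty(Q_j)}\le\Theta\aver_{\widehat{Q_j}}F(x)\,dx\le\Theta\lambda$ and by (iii) $\aver_{Q_j}G^{Q_j}(x)\,dx\le\delta\aver_{\widehat{Q_j}}F(x)\,dx+g^{Q_j}\le(\delta+\gamma)\lambda$. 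Averaging $F\le G^{Q_j}+H^{Q_j}$ over dyadic $Q\subset Q_j$ containing $x$ gives $\M_{Q_j}F(x)\le\M_{Q_j}G^{Q_j}(x)+\Theta\lambda$, hence $E_j\subset\{x\in Q_j:\M_{Q_j}G^{Q_j}(x)>(K-\Theta)\lambda\}$. The local dyadic maximal operator obeys the weak-type $(1,1)$ bound with constant $1$, so
\[
|E_j|\le\frac{1}{(K-\Theta)\lambda}\int_{Q_j}G^{Q_j}(x)\,dx\le\frac{(\delta+\gamma)\lambda\,|Q_j|}{(K-\Theta)\lambda}=\frac{\delta+\gamma}{K-\Theta}\,|Q_j|.
\]
Summing over $j$ and using $\sum_j|Q_j|=|\Omega_\lambda|$ yields \eqref{eqn:good-lambda}.

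For \eqref{eqn:good-lambda:weak-Lp} and \eqref{eqn:good-lambda:Lp} I would feed \eqref{eqn:good-lambda} into the standard good-$\lambda$ integration. Set $m(t)=|\{\M_{Q_0}F>t\}|$, $n(t)=|\{G_{Q_0}^*>t\}|$ and $\lambda_0=\aver_{Q_0}F(x)\,dx$, so that \eqref{eqn:good-lambda} reads $m(K\lambda)\le\frac{\delta+\gamma}{K-\Theta}m(\lambda)+n(\gamma\lambda)$ for all $\lambda\ge\lambda_0$. Multiplying by $pt^{p-1}$, integrating over $t\in(0,KR)$, splitting at $K\lambda_0$ and changing variables $t=K\lambda$ (every truncated integral is finite since $m\le|Q_0|$), one gets
\[
\int_0^{KR}pt^{p-1}m(t)\,dt\le(K\lambda_0)^p|Q_0|+\frac{(\delta+\gamma)K^p}{K-\Theta}\int_0^{R}pt^{p-1}m(t)\,dt+\frac{K^p}{\gamma^p}\int_{Q_0}(G_{Q_0}^*)^p\,dx,
\]
and likewise with $\sup_{0<t\le R}t^pm(t)$ replacing $\int_0^{R}pt^{p-1}m(t)\,dt$ for the weak estimate. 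If $c_2:=\frac{(\delta+\gamma)K^p}{K-\Theta}<1$ the middle term is absorbed from the left; letting $R\to\infty$, dividing by $|Q_0|$, extracting $p$-th roots, and using $F\le\M_{Q_0}F$ a.e.\ (dyadic differentiation) gives \eqref{eqn:good-lambda:Lp} and \eqref{eqn:good-lambda:weak-Lp} with a constant depending only on $p,\Theta,\delta$. It remains to choose $K>\Theta$ and $\gamma\in(0,1)$ with $c_2<1$: when $\delta=0$ any $K$ works once $\gamma$ is small, and when $\delta>0$ one minimizes $K\mapsto K^p/(K-\Theta)$ over $K>\Theta$ at $K=p\Theta/(p-1)$, obtaining $\inf_{K>\Theta}\frac{\delta K^p}{K-\Theta}=\delta\,\Theta^{p-1}\frac{p^p}{(p-1)^{p-1}}$; the elementary inequality $\frac{p^p}{(p-1)^{p-1}}\le2^p$ (valid for all $p>1$, with equality at $p=2$, shown by checking that $p\log p-(p-1)\log(p-1)-p\log2$ attains its maximum $0$ at $p=2$) bounds this by $2\,\delta\,(2\Theta)^{p-1}$, which is $<1$ precisely when $p<1+\frac{\log(1/(2\delta))}{\log(2\Theta)}$. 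For this $K$ and $\gamma$ small enough, $c_2<1$ by continuity, and $C_{p,\Theta,\delta}$ is then fixed.

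The main obstacle is the bookkeeping in the first two paragraphs rather than any single hard estimate: one must verify that $\M_{Q_0}F$ genuinely localizes to the stopping cubes $Q_j$, so that hypotheses (i)--(iii) --- which only compare $F$ on $Q_j$ against the \emph{parent} average $\aver_{\widehat{Q_j}}F(x)\,dx$ --- are applicable, and that the auxiliary constant $g^{Q_j}$ is captured by the side condition $G_{Q_0}^*\le\gamma\lambda$ through the pointwise bound $g^{Q_j}\le G_{Q_0}^*$ on $Q_j$. Once \eqref{eqn:good-lambda} is in hand, the passage to the $L^p$ and $L^{p,\infty}$ bounds is the usual good-$\lambda$ argument; the only delicate point there is the elementary inequality $p^p/(p-1)^{p-1}\le2^p$, which pins down the sharp admissible range of $p$.
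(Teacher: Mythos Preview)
Your proof is correct and follows essentially the same route as the paper's: Calder\'on--Zygmund stopping at level $\lambda$, localization $\M_{Q_0}F=\M_{Q_j}F$ on the stopping cubes, and the weak-$(1,1)$ bound for $\M_{Q_j}G^{Q_j}$ to obtain \eqref{eqn:good-lambda}, followed by the standard good-$\lambda$ integration with truncation and absorption. The one place where you diverge is the selection of $K$ in the integration step: the paper simply sets $K=2\Theta$ and observes that $(2\Theta)^p\delta/\Theta<1$ is exactly the stated range on $p$, whereas you optimize over $K$ at $K=p\Theta/(p-1)$ and then invoke the inequality $p^p/(p-1)^{p-1}\le 2^p$. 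Since that inequality collapses your optimized bound to the same quantity $2\delta(2\Theta)^{p-1}$, both approaches yield the identical admissible range of $p$; the paper's direct choice is shorter, while your argument makes clear that $K=2\Theta$ is in fact (up to the inequality you prove) the optimal choice.
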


Note that \eqref{eqn:good-lambda:weak-Lp} and \eqref{eqn:good-lambda:Lp} are non-trivial only if $p>1$, that is why we only consider this range.

Assuming this result we are going to derive applications to the John-Niren\-berg, Franchi-P\'erez-Wheeden and Gurov-Reshetnyak conditions.
We have split these in two sections: one where we use ``classical'' oscillations (see Section \ref{section:App-I}) and another where we use some ``generalized oscillations'' (see Section \ref{section:App-II}).

\section{Applications I: Classical oscillations}\label{section:App-I}

As an application of Theorem \ref{theor:good-lambda:dyadic} we shall give new transparent and simple proofs of  three
known results, see Corollaries \ref{corol:JNp}, \ref{corol:Dp-dyadic} and \ref{corol:GR} below.
%In part II of the paper, similar arguments will be used in much more general context.

\subsection{John-Nirenberg spaces}

We first recall the definition of the John-Nirenberg space. Let $1<p<\infty$. For a cube $Q\subset \re^n$ and $f\in L^1(Q)$, we denote
\begin{multline}\label{JNp}
\|f\|_{JN_p^\dya,Q}
:=
\sup \left(\frac1{|Q|}\sum_i \left(\aver_{Q_i} |f(x)-f_{Q_i}|\,dx\right)^{p}|Q_i|\right)^{1/p}
\\ 
=
\sup \left\|
\sum_i \left(\aver_{Q_i} |f(x)-f_{Q_i}|\,dx\right)\chi_{Q_i}
\right\|_{L^p,Q},
\end{multline}
where the suprema are taken over all pairwise disjoint subfamilies $\{Q_i\}_i$ of $\dd(Q)$.

Given a cube $Q_0\subset \re^n$, we say that $f\in JN_p^\dya(Q_0)$, if $f\in L^1(Q_0)$ and
\begin{align}\label{JNp-qq}
\|f\|_{JN_p^\dya(Q_0)}
:&=\sup_{Q\in\dd(Q_0)} \|f\|_{JN_p^\dya,Q}
<\infty.
\end{align}

The next result gives the embedding $JN_p^\dya(Q) \hookrightarrow L^{p,\infty}(Q)$.

\begin{corollary}\label{corol:JNp}
Given $1<p<\infty$, there exists a constant $C$ (depending only on $p$ and $n$) such that for every cube $Q\subset\re^n$ and $f\in L^1(Q)$, we have
\begin{equation}\label{JNp:weak:Lp}
\|f-f_{Q}\|_{L^{p,\infty},Q}
\le
C
\|f\|_{JN_p^\dya, Q}.
\end{equation}
\end{corollary}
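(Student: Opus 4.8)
The strategy is to realize the John–Nirenberg condition as an instance of the hypotheses of Theorem \ref{theor:good-lambda:dyadic}, applied to the function $F = |f - f_{Q_0}|$ (writing $Q_0 = Q$ for the fixed cube in the statement), and then to read off the conclusion \eqref{eqn:good-lambda:weak-Lp}. The point is that for each dyadic $Q \in \dd(Q_0)\setminus\{Q_0\}$ we need a splitting $F \le G^Q + H^Q$ with $H^Q$ controlled in $L^\infty$ by a multiple of the average of $F$ over the parent $\widehat Q$, and with the average of $G^Q$ over $Q$ small plus an error term $g^Q$ whose maximal function $G^*_{Q_0}$ lies in $L^{p,\infty}$ with norm controlled by $\|f\|_{JN_p^\dya, Q_0}$.

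The natural choice is dictated by the elementary inequality $|f(x) - f_{Q_0}| \le |f(x) - f_{\widehat Q}| + |f_{\widehat Q} - f_{Q_0}|$. I would take $H^Q(x) = |f_{\widehat Q} - f_{Q_0}|$, a constant on $Q$; since $|f_{\widehat Q} - f_{Q_0}| \le \aver_{\widehat Q} |f - f_{Q_0}| = \aver_{\widehat Q} F$, hypothesis (ii) holds with $\Theta = 1$. Then $G^Q(x) = |f(x) - f_{\widehat Q}|$, and $\aver_Q G^Q = \aver_Q |f - f_{\widehat Q}|$. To verify (iii), I split further: either $\aver_Q |f - f_{\widehat Q}| \le \aver_Q |f-f_Q| + |f_Q - f_{\widehat Q}|$, but that reintroduces an oscillation over $Q$ itself; more efficiently, I estimate $\aver_Q |f - f_{\widehat Q}| \le 2 \aver_Q|f - f_Q|$ and simply set $\delta = 0$, $g^Q = 2\aver_Q |f - f_Q|$, so that (iii) reads $\aver_Q G^Q \le g^Q$. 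With $\delta = 0$ and $\Theta = 1$, Theorem \ref{theor:good-lambda:dyadic} permits any $p \in (1,\infty)$, matching the statement.

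It remains to bound $\|G^*_{Q_0}\|_{L^{p,\infty}, Q_0}$, where $G^*_{Q_0}(x) = \sup_{x \in Q \in \dd(Q_0)} 2\aver_Q |f - f_Q|\, dx$. This is the crux, and it is exactly where the $JN_p$ norm enters: $G^*_{Q_0}$ is (twice) the maximal function of the "oscillation data" $\{\aver_Q|f-f_Q|\}_{Q}$, and the $JN_p$ condition is precisely a uniform $L^p$-bound on sums of these averages over disjoint subfamilies — i.e. a discrete analogue of a Carleson/packing condition. I expect the weak-$(p,p)$ bound $\|G^*_{Q_0}\|_{L^{p,\infty},Q_0} \lesssim \|f\|_{JN_p^\dya, Q_0}$ to follow from a Calderón–Zygmund stopping-time argument at each level $\lambda$: select the maximal dyadic cubes $Q$ with $2\aver_Q|f-f_Q| > \lambda$; these are pairwise disjoint, cover $\{G^*_{Q_0} > \lambda\}$, and on each one $\lambda^p |Q| \le 2^p (\aver_Q|f-f_Q|)^p |Q|$, so summing and using that the selected cubes form an admissible disjoint family gives $\lambda^p |\{G^*_{Q_0} > \lambda\}| \le 2^p |Q_0|\, \|f\|_{JN_p^\dya, Q_0}^p$. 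Finally, feeding this into \eqref{eqn:good-lambda:weak-Lp} and absorbing the term $\aver_{Q_0} F = \aver_{Q_0}|f - f_{Q_0}| \le \|f\|_{JN_p^\dya, Q_0}$ (take the trivial disjoint family $\{Q_0\}$) yields \eqref{JNp:weak:Lp}. The only subtlety to watch is that Theorem \ref{theor:good-lambda:dyadic} requires $\lambda \ge \aver_{Q_0} F$, but that is harmless for a weak-type estimate since the small values of $\lambda$ contribute a bounded amount; this is already built into the passage from \eqref{eqn:good-lambda} to \eqref{eqn:good-lambda:weak-Lp} inside the theorem.
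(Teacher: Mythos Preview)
Your overall strategy --- apply Theorem \ref{theor:good-lambda:dyadic} to $F=|f-f_{Q_0}|$ with $\delta=0$, then bound $\|G_{Q_0}^*\|_{L^{p,\infty},Q_0}$ by a stopping-time argument --- is exactly the paper's, and your stopping-time step and the absorption of $\aver_{Q_0}F$ are both correct. The problem is in your choice of splitting.

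You take $H^Q=|f_{\widehat Q}-f_{Q_0}|$ and $G^Q=|f-f_{\widehat Q}|$, and then claim
\[
\aver_Q |f-f_{\widehat Q}|\,dx \le 2\,\aver_Q |f-f_Q|\,dx.
\]
This inequality is false in general: take $f$ constant on $Q$ but not on $\widehat Q$, so that $\aver_Q|f-f_Q|=0$ while $\aver_Q|f-f_{\widehat Q}|=|f_Q-f_{\widehat Q}|$ can be any positive number. (What \emph{is} true is the reverse: for any constant $c$, $\aver_Q|f-f_Q|\le 2\aver_Q|f-c|$, since $|f_Q-c|\le\aver_Q|f-c|$.) Consequently you cannot verify (iii) with $\delta=0$ and $g^Q=2\aver_Q|f-f_Q|$, and the argument breaks at this point.

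The fix is simply to split with $f_Q$ rather than $f_{\widehat Q}$: set $G^Q=|f-f_Q|$ and $H^Q=|f_Q-f_{Q_0}|$. Then $\aver_Q G^Q=\aver_Q|f-f_Q|=:g^Q$ with $\delta=0$ exactly, and
\[
\|H^Q\|_{L^\infty(Q)}=|f_Q-f_{Q_0}|\le \aver_Q|f-f_{Q_0}|\,dx\le 2^n\aver_{\widehat Q}F\,dx,
\]
so (ii) holds with $\Theta=2^n$ instead of $\Theta=1$. This costs nothing, since with $\delta=0$ the theorem applies for every $p>1$ regardless of $\Theta$. With this correction $G_{Q_0}^*$ is precisely the localized dyadic sharp maximal function $\M^{\#}_{Q_0}f$, and your stopping-time argument goes through verbatim; this is exactly how the paper proceeds.
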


\begin{proof}
Fix $Q_0\subset \re^n$ and assume that $f\in L^1(Q_0)$ satisfies $\|f\|_{JN_p^\dya, Q_0}<\infty$.
We shall apply Theorem \ref{theor:good-lambda:dyadic} to the
function $F(x):=|f(x)-f_{Q_0}|$. Since $f$ belongs to $L^1(Q_0)$, so does
$F$. Take $Q\in\dd(Q_0)\setminus\{Q_0\}$ and write
$$
F(x)
=
|f(x)-f_{Q_0}|
\le
|f(x)-f_Q|+|f_Q-f_{Q_0}|
=: G^Q(x)+H^Q(x).
$$
Note that
$$
\|H_Q\|_{L^\infty(Q)}
=
|f_Q-f_{Q_0}|
\le
\aver_{Q}|f(x)-f_{Q_0}|\,dx
\le
2^n \aver_{\widehat{Q}}F(x)\,dx,
$$
which is assumption (ii) in Theorem \ref{theor:good-lambda:dyadic} with $\Theta=2^n$. Besides,
$$
\aver_{Q} G^Q(x)\,dx
=
\aver_{Q} |f(x)-f_Q|\,dx
=:g^Q,
$$
which is  assumption (iii) in Theorem \ref{theor:good-lambda:dyadic} with $\delta=0$. Note that
$$
G_{Q_0}^*(x)
=
\sup_{x\in Q\in\dd(Q_0)} g^Q
=\sup_{x\in Q\in\dd(Q_0)} \aver_{Q} |f(x)-f_Q|\,dx
=
\M^\#_{Q_0} f(x),
$$
which is the dyadic and localized sharp maximal function.
We can then apply Theorem \ref{theor:good-lambda:dyadic} (with any $p>1$ since $\delta=0$) and obtain
\begin{multline*}
\|f-f_{Q_0}\|_{L^{p,\infty},Q_0}
\lesssim
\|G_{Q_0}^*\|_{L^{p,\infty},Q_0}+ F_{Q_0}
\\
=
\|\M^\#_{Q_0} f\|_{L^{p,\infty},Q_0}+\aver_{Q_0}|f(x)-f_{Q_0}|\,dx
\le
2\|\M^\#_{Q_0} f\|_{L^{p,\infty},Q_0}.
\end{multline*}
This is a well-known inequality, and
Theorem \ref{theor:good-lambda:dyadic} is partly motivated by it. Once we have that, we obtain the desired embedding by a standard stopping-time argument. Consider the distribution set $\Omega_\lambda = \{x\in Q_0: \M^\#_{Q_0}f(x) >\lambda\}$ with $\lambda>0$.
First consider the case
\[
\lambda\ge \lambda_0:=\aver_{Q_0}|f(x)-f_{Q_0}|\,dx.
\]
Subdivide dyadically $Q_0$ and stop
whenever
\[
\aver_{Q}|f(x)-f_Q|\,dx>\lambda.
\]
This defines a family of Calder\'on-Zygmund cubes $\{Q_i\}_i\subset\dd(Q_0)\setminus\{Q_0\}$
which are maximal, and therefore pairwise disjoint,  with respect to
the stopping criterion. By our choice of $\lambda$ they are proper subcubes of
 $Q_0$. Notice that $\Omega_\lambda=\cup_i Q_i$. Then, using that $Q_i$ is one of the stopping cubes we have
$$
\lambda^p\frac{|\Omega_\lambda|}{|Q_0|}
=
\frac{\lambda^p}{|Q_0|}\sum_i |Q_i|
=
\frac1{|Q_0|}\sum_i \left(\aver_{Q_i} |f(x)-f_{Q_i}|\,dx\right)^{p}|Q_i|
\le
\|f\|_{JN_p^\dya, Q_0}^p,
$$
since $\{Q_i\}_i\subset\dd(Q_0)$ is a pairwise disjoint family.

Consider now the case $0<\lambda\le \lambda_0$ and note  that by definition of the $JN^\dya_p$ norm we immediately have
$$
\lambda^p\frac{|\Omega_\lambda|}{|Q_0|}
\le
\lambda^p
\le
\left(\aver_{Q_0}|f(x)-f_{Q_0}|\,dx\right)^p
\le
\|f\|_{JN_p^\dya, Q_0}^p.
$$
Gathering the two cases and taking the supremum in $\lambda>0$, we conclude that
$$
\|\M^\#_{Q_0} f\|_{L^{p,\infty},Q_0}
\le
\|f\|_{JN_p^\dya, Q_0}
$$
and thus
$$
\|f-f_{Q_0}\|_{L^{p,\infty},Q_0}
\lesssim
\|\M^\#_{Q_0} f\|_{L^{p,\infty},Q_0}
\le
\|f\|_{JN_p^\dya, Q_0}.
$$
\end{proof}

In the previous proof we have obtained
$$
\|f-f_{Q_0}\|_{L^{p,\infty},Q_0}
\lesssim
\|\M^\#_{Q_0} f\|_{L^{p,\infty},Q_0}
\le
\|f\|_{JN_p^\dya, Q_0}
$$
and one may ask whether we can reverse any of the previous inequalities.
Since $1<p<\infty$ we have that $\M_{Q_0}$ is bounded on $L^{p,\infty}(Q_0)$ and therefore
$$
%\|f-f_{Q_0}\|_{L^{p,\infty},Q_0}
%\lesssim
\|\M^\#_{Q_0} f\|_{L^{p,\infty},Q_0}
\le
2\|\M_{Q_0} (f-f_{Q_0})\|_{L^{p,\infty},Q_0}
\lesssim
\|f-f_{Q_0}\|_{L^{p,\infty},Q_0}.
$$
On the other hand, in general
the inclusion $JN_p^\dya(Q_0) \hookrightarrow L^{p,\infty}(Q_0)$ is strict.
In $\re$ we take $Q_0=[0,1)$ and $f(x)=x^{-1/p}\chi_{Q_0}(x)$.
It is straightforward to see that $f \in L^{p,\infty}(Q_0)$ but
$f\notin JN_p^\dya(Q_0)$. For the details we refer to \cite{ABKY}.

\subsection{Franchi-P\'erez-Wheeden self-improvement}
Let us consider a functional $a:\dd(Q_0)\longrightarrow [0,\infty)$. For $1<p<\infty$ and $Q\in\dd(Q_0)$ we set
$$
\|a\|_{D_p,Q}
=
\frac1{a(Q)}\sup
\left(\frac1{|Q|}\sum_{i} a(Q_i)^p|Q_i|\right)^{1/p}
=
\sup
\frac{\displaystyle\Big\|\sum_i a(Q_i)\,\chi_{Q_i}\Big\|_{L^p,Q}}
{\displaystyle\Big\|a(Q)\,\chi_{Q}\Big\|_{L^p,Q}},
$$
where the suprema are taken over all pairwise disjoint families $\{Q_i\}_i\subset \dd(Q)$.
We say that $a\in D_p^\dya(Q_0)$ provided
$$
\|a\|_{D_p^\dya(Q_0)}:=\sup_{Q\in\dd(Q_0)} \|a\|_{D_p,Q}<\infty.
$$

We are going to show that the following self-improvement result in \cite{FPW} is a
straightforward consequence of Corollary \ref{corol:JNp}.

\begin{corollary}\label{corol:Dp-dyadic}
Fix a cube $Q_0\subset\re^n$. Let $f\in L^1(Q_0)$  be such that
\begin{equation}\label{Poincare:gener:corol}
\aver_Q|f(x)-f_Q|\,dx
\le
a(Q),
\end{equation}
for every $Q\in\dd(Q_0)$. Here $a$ is a functional \textup{(}depending possibly on $f$\textup{)} as above.
Let $1<p<\infty$.
If $a\in D_p^\dya(Q_0)$, then for every $Q\in\dd(Q_0)$, we have
\begin{equation}\label{Poincare:gener:corol:Lp}
\|f-f_Q\|_{L^{p,\infty},Q}
\lesssim
\|a\|_{D_p^\dya(Q_0)}\,
a(Q).
\end{equation}
\end{corollary}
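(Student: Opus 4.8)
The plan is to recognize that the two hypotheses together force $f$ to belong to the John--Nirenberg space $JN_p^\dya(Q)$ for every $Q\in\dd(Q_0)$, with a quantitative bound on its norm, and then to read off the conclusion from Corollary \ref{corol:JNp}. In other words, $D_p^\dya$ plus the generalized Poincar\'e inequality is essentially just a repackaging of the defining property of $JN_p^\dya$.

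First I would fix $Q\in\dd(Q_0)$ and let $\{Q_i\}_i\subset\dd(Q)$ be an arbitrary pairwise disjoint family. Since $\dd(Q)\subset\dd(Q_0)$, the hypothesis \eqref{Poincare:gener:corol} applies to each $Q_i$, so that $\aver_{Q_i}|f(x)-f_{Q_i}|\,dx\le a(Q_i)$. Raising to the $p$-th power, multiplying by $|Q_i|$, and summing over $i$ gives
\[
\frac1{|Q|}\sum_i\Big(\aver_{Q_i}|f(x)-f_{Q_i}|\,dx\Big)^p|Q_i|
\le
\frac1{|Q|}\sum_i a(Q_i)^p|Q_i|.
\]
Next, because $\{Q_i\}_i$ is a pairwise disjoint subfamily of $\dd(Q)$, the definition of $\|a\|_{D_p,Q}$ together with $a\in D_p^\dya(Q_0)$ bounds the right-hand side by $\|a\|_{D_p,Q}^p\,a(Q)^p\le\|a\|_{D_p^\dya(Q_0)}^p\,a(Q)^p$. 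Taking the supremum over all such families $\{Q_i\}_i$ and recalling \eqref{JNp} yields $\|f\|_{JN_p^\dya,Q}\le\|a\|_{D_p^\dya(Q_0)}\,a(Q)$. (If $a(Q)=0$ then \eqref{Poincare:gener:corol} forces $f=f_Q$ a.e.\ on $Q$ and \eqref{Poincare:gener:corol:Lp} is trivial, so one may assume $a(Q)>0$ when invoking the normalization in the definition of $\|a\|_{D_p,Q}$.)

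Finally, I would apply Corollary \ref{corol:JNp} to the cube $Q$ and the function $f$, obtaining
\[
\|f-f_Q\|_{L^{p,\infty},Q}
\le
C\,\|f\|_{JN_p^\dya,Q}
\le
C\,\|a\|_{D_p^\dya(Q_0)}\,a(Q),
\]
which is exactly \eqref{Poincare:gener:corol:Lp}. There is no genuine obstacle in this argument; the only point requiring a moment's care is that the suprema in the definitions of $\|f\|_{JN_p^\dya,Q}$ and of $\|a\|_{D_p,Q}$ run over the same collection of pairwise disjoint subfamilies of $\dd(Q)$, so that the pointwise inequality \eqref{Poincare:gener:corol} transfers cleanly from one to the other, plus the harmless degenerate case $a(Q)=0$ noted above.
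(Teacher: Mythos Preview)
Your proof is correct and follows essentially the same approach as the paper: you bound $\|f\|_{JN_p^\dya,Q}$ by $\|a\|_{D_p^\dya(Q_0)}\,a(Q)$ using the generalized Poincar\'e inequality and the $D_p$ condition, then invoke Corollary~\ref{corol:JNp}. The paper's proof is identical in structure, though it does not pause on the degenerate case $a(Q)=0$ that you handle explicitly.
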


\begin{proof}
Fix $Q\in\dd(Q_0)$ and observe that \eqref{Poincare:gener:corol} implies
\begin{align*}
\|f\|_{JN_p^\dya,Q}
&=
\sup_{\{Q_i\}_i\subset\dd(Q)} \left(\frac1{|Q|}\sum_i \left(\aver_{Q_i} |f(x)-f_{Q_i}|\,dx\right)^{p}|Q_i|\right)^{1/p}
\\
&\le
\sup_{\{Q_i\}_i\subset\dd(Q)} \left(\frac1{|Q|}\sum_i a(Q_i)^p|Q_i|\right)^{1/p}
\\
&\le
\|a\|_{D_p,Q}\,a(Q)\le
\|a\|_{D_p^\dya(Q_0)}a(Q).
\end{align*}
This and \eqref{JNp:weak:Lp} immediately give
$$
\|f-f_{Q}\|_{L^{p,\infty},Q}
\lesssim
\|f\|_{JN_p^\dya, Q}
\lesssim
\|a\|_{D_p^\dya(Q_0)}a(Q).
$$
\end{proof}

\begin{remark}
In \cite{MaPe} exponential self-improvement results are obtained as follows. Assuming that $f$ satisfies \eqref{Poincare:gener:corol} with $a$ quasi-increasing (i.e., $a(Q_1)\le C_aa(Q_2)$ for $Q_1\subset Q_2$ and $Q_1, Q_2\in\dd(Q_0)$), then
$$
\|f-f_{Q}\|_{{\rm exp} L,Q}\le Ca(Q),
$$
for every $Q\in\dd(Q_0)$.
As in the  previous proof we can easily obtain such an estimate from the classical John-Nirenberg inequality:
\begin{multline*}
\|f-f_{Q}\|_{{\rm exp} L,Q}
\lesssim
\|f\|_{\bmo^{\rm dyadic }(Q)}
=
\sup_{Q'\in\dd(Q)} \aver_{Q'} |f(x)-f_{Q'}|\,dx
\\
\lesssim
\sup_{Q'\in\dd(Q)} a(Q')
\le
C_a\,
a(Q).
\end{multline*}

\end{remark}

\subsection{Gurov-Reshetnyak classes} Our last application in classical oscillations
is a new proof of the  self-improvement of the dyadic Gurov-Reshetnyak condition with the expected asymptotical behavior as $\varepsilon \to 0^+$.
Recall that given $0\le w\in L^1(Q_0)$ we say that
$w\in GR_\varepsilon^\dya(Q_0)$, where $0<\varepsilon<2$, if
\begin{equation}\label{GR}
\aver_Q|w(x)-w_Q|\,dx
\le
\varepsilon\, w_Q,
\qquad\qquad
\end{equation}
for every $Q\in\dd(Q_0)$.

\begin{corollary}\label{corol:GR}
Fix a cube $Q_0\subset\re^n$. If $w\in GR_\varepsilon^\dya(Q_0)$ with
$\varepsilon>0$ small enough \textup{(}for instance, $0<\varepsilon < 2^{-(n+1)}$\textup{)},
there exists $p(\varepsilon)>1$  such that
for every $1\le p<p(\varepsilon)$,
\begin{equation}\label{GR-p-dyadic}
\left(\aver_Q|w(x)-w_Q|^p\,dx\right)^{1/p}
\le
C\,\varepsilon\, w_Q
\end{equation}
for every $Q\in\dd(Q_0)$, and where
$C$ depends only  $n$ and $p$. Moreover,
 $p(\varepsilon)\to\infty$ as $\varepsilon\to 0^+$.
Therefore, $w\in RH_p^{\dya}(Q_0)$
for every $1\le p<p(\varepsilon)$, that is, $w$ satisfies the reverse H\"older inequality
\begin{equation}\label{RH-p-dyadic}
\left(\aver_Q w(x)^p \,dx\right)^{1/p}
\lesssim
\aver_{Q} w(x)\,dx
\end{equation}
for every $Q\in\dd(Q_0)$.
\end{corollary}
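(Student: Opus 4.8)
The plan is to apply Theorem~\ref{theor:good-lambda:dyadic} to $F:=|w-w_{Q_0}|$, proceeding as in the proof of Corollary~\ref{corol:JNp} but now producing a nonzero parameter $\delta\approx\varepsilon$ in hypothesis~(iii). Since $w\in L^1(Q_0)$ we have $F\in L^1(Q_0)$. For $Q\in\dd(Q_0)\setminus\{Q_0\}$ I would use the splitting
\[
F(x)=|w(x)-w_{Q_0}|\le |w(x)-w_Q|+|w_Q-w_{Q_0}|=:G^Q(x)+H^Q(x),
\]
so that (i) holds with $G^Q,H^Q\ge0$; and, exactly as in Corollary~\ref{corol:JNp},
\[
\|H^Q\|_{L^\infty(Q)}=|w_Q-w_{Q_0}|\le\aver_{Q}|w(x)-w_{Q_0}|\,dx\le 2^n\aver_{\widehat Q}F(x)\,dx,
\]
which is (ii) with $\Theta=2^n$. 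The only genuinely new input is hypothesis~(iii).

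To verify (iii), the Gurov--Reshetnyak inequality \eqref{GR} on $Q$ gives $\aver_{Q}G^Q(x)\,dx=\aver_{Q}|w(x)-w_Q|\,dx\le\varepsilon\, w_Q$, and it remains to replace $w_Q$ by a multiple of $\aver_{\widehat Q}F(x)\,dx$ plus a $Q$-independent constant. I would first apply \eqref{GR} to the parent: since $|w_Q-w_{\widehat Q}|=\big|\aver_{Q}(w-w_{\widehat Q})\,dx\big|\le 2^n\aver_{\widehat Q}|w-w_{\widehat Q}|\,dx\le 2^n\varepsilon\, w_{\widehat Q}$ and $w\ge0$, one gets $w_Q\le(1+2^n\varepsilon)\,w_{\widehat Q}$; and trivially $w_{\widehat Q}=\aver_{\widehat Q}(w-w_{Q_0})\,dx+w_{Q_0}\le\aver_{\widehat Q}F(x)\,dx+w_{Q_0}$. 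Combining,
\[
\aver_{Q}G^Q(x)\,dx\le\varepsilon\, w_Q\le\varepsilon(1+2^n\varepsilon)\Big(\aver_{\widehat Q}F(x)\,dx+w_{Q_0}\Big),
\]
so (iii) holds with $\delta:=\varepsilon(1+2^n\varepsilon)$ and the nonnegative $Q$-independent constant $g^Q:=\varepsilon(1+2^n\varepsilon)\,w_{Q_0}$; hence $G^*_{Q_0}\equiv\varepsilon(1+2^n\varepsilon)\,w_{Q_0}$ is constant. For $0<\varepsilon<2^{-(n+1)}$ we have $2^n\varepsilon<\tfrac12$, so $1+2^n\varepsilon<\tfrac32$ and $\delta<\tfrac12$, and Theorem~\ref{theor:good-lambda:dyadic} applies with $\Theta=2^n$.

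To finish I would feed this into \eqref{eqn:good-lambda:Lp}. Since $\|G^*_{Q_0}\|_{L^p,Q_0}=\varepsilon(1+2^n\varepsilon)\,w_{Q_0}$ and, by \eqref{GR} on $Q_0$, $\aver_{Q_0}F(x)\,dx\le\varepsilon\, w_{Q_0}$, the theorem yields, for every $p$ in the admissible range $1<p<p(\varepsilon):=1+\frac{\log(1/(2\delta))}{(n+1)\log 2}$,
\[
\Big(\aver_{Q_0}|w(x)-w_{Q_0}|^p\,dx\Big)^{1/p}\le C\,\varepsilon\, w_{Q_0},
\]
where the factor $1+2^n\varepsilon<\tfrac32$ is absorbed and the constant depends only on $n$ and $p$; the case $p=1$ is \eqref{GR} itself. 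Because $w\in GR_\varepsilon^\dya(Q_0)$ entails $w\in GR_\varepsilon^\dya(Q)$ for every $Q\in\dd(Q_0)$ (as $\dd(Q)\subset\dd(Q_0)$), applying the last estimate with $Q_0$ replaced by $Q$ gives \eqref{GR-p-dyadic}. The asymptotics $p(\varepsilon)\to\infty$ as $\varepsilon\to0^+$ are immediate, since $\delta=\varepsilon(1+2^n\varepsilon)\to0$ and thus $\log(1/(2\delta))\to\infty$. Finally \eqref{RH-p-dyadic} follows from \eqref{GR-p-dyadic} by the triangle inequality in $L^p(Q,dx/|Q|)$: $\big(\aver_Q w(x)^p\,dx\big)^{1/p}\le\big(\aver_Q|w(x)-w_Q|^p\,dx\big)^{1/p}+w_Q\le(C\varepsilon+1)\,\aver_Q w(x)\,dx$.

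The step I expect to be the main obstacle is the verification of (iii): the Gurov--Reshetnyak condition controls the oscillation of $w$ over $Q$ only through the intrinsic average $w_Q$, whereas Theorem~\ref{theor:good-lambda:dyadic} demands control by the average of $F=|w-w_{Q_0}|$ over the \emph{parent} $\widehat Q$ together with an additive term that becomes $G^*_{Q_0}$. Bridging this requires invoking \eqref{GR} one level up and using $w\ge0$, and it is precisely this maneuver that manufactures the factor $\delta\approx\varepsilon$ --- and hence the sharp rate $p(\varepsilon)\to\infty$ as $\varepsilon\to0^+$.
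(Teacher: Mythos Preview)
Your proof is correct and follows essentially the same approach as the paper's: apply Theorem~\ref{theor:good-lambda:dyadic} to $F=|w-w_{Q_0}|$ with the splitting $G^Q=|w-w_Q|$, $H^Q=|w_Q-w_{Q_0}|$, obtain $\Theta=2^n$, and then derive the reverse H\"older inequality from~\eqref{GR-p-dyadic} by the triangle inequality. The only minor difference is in verifying~(iii): the paper bounds $w_Q\le \aver_Q F\,dx+w_{Q_0}\le 2^n\aver_{\widehat Q}F\,dx+w_{Q_0}$ directly, yielding $\delta=2^n\varepsilon$, whereas you take an extra (harmless) detour through the GR condition on the parent to get $w_Q\le(1+2^n\varepsilon)w_{\widehat Q}$ and then bound $w_{\widehat Q}$, yielding the slightly smaller $\delta=\varepsilon(1+2^n\varepsilon)$.
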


\begin{proof}
Clearly, it is enough to obtain \eqref{GR-p-dyadic} for $Q_0$ itself, since $GR_\varepsilon^\dya(Q_0)\subset GR_\varepsilon^\dya(Q)$ for every $Q\in \dd(Q_0)$.
We wish to apply Theorem \ref{theor:good-lambda:dyadic} to the function
 $F(x):=|w(x)-w_{Q_0}|$. For any $Q\in\dd(Q_0)\setminus \{Q_0\}$ we have
$$
F(x)
=
|w(x)-w_{Q_0}|
\le
|w(x)-w_Q|+|w_Q-w_{Q_0}|
=: G^Q(x)+H^Q(x).
$$
Note that
$$
\|H_Q\|_{L^\infty(Q)}
=
|w_Q-w_{Q_0}|
\le
\aver_{Q}|w(x)-w_{Q_0}|\,dx
\le
2^n \aver_{\widehat{Q}}F(x)\,dx,
$$
which gives (ii) in Theorem \ref{theor:good-lambda:dyadic} with $\Theta=2^n$. By \eqref{GR} we obtain
$$
\aver_{Q} G^Q(x)\,dx
=
\aver_{Q} |w(x)-w_Q|\,dx
\le
\varepsilon w_Q
\le
2^n\varepsilon\aver_{\widehat{Q}}F(x)\,dx
+
\varepsilon\, w_{Q_0},
$$
which is (iii) in Theorem \ref{theor:good-lambda:dyadic} with
$\delta=2^n\varepsilon$
and $g^Q=\varepsilon\, w_{Q_0}$ and hence $G_{Q_0}^*\equiv \varepsilon w_{Q_0}$, a constant function.
Assuming that $0<\varepsilon<2^{-(n+1)}$ (i.e., $0<\delta<2^{-1}$), set
$$
p(\varepsilon)
=
1+
\frac{\log(1/(2\,\delta))}{\,\log(2\,\Theta)}
=
\frac1{n+1}\frac{\log (\varepsilon^{-1})}{\log 2}>1.
$$
Observe that $p(\varepsilon)\to\infty$ as $\varepsilon\to 0^+$.
If we now take $1\le p<p(\varepsilon)$, \eqref{eqn:good-lambda:Lp} gives as desired \eqref{GR-p-dyadic}:
\begin{multline*}
 \|w-w_{{Q}_0}\|_{L^p,Q_0}
\leq
\|\M_{Q_0}F\|_{L^{p},Q_0} \lesssim \|G_{Q_0}^*\|_{L^{p},Q_0} + F_{Q_0}
\\
=
\varepsilon\, w_{Q_0}
+\mvint_{Q_0} |w-w_{{Q}_0}|\,dx \lesssim
\varepsilon\, w_{Q_0}.
\end{multline*}
To complete the proof we just observe that
\eqref{GR-p-dyadic} and the triangle inequality immediately imply \eqref{RH-p-dyadic}.
\end{proof}

\begin{remark}
An analogous argument gives a similar
self-improvement for the \emph{weak} dyadic
Gurov-Reshetnyak condition
$$
\aver_Q|w(x)-w_Q|\,dx
\le
\varepsilon w_{\widehat{Q}}
$$
for every $Q\in\dd(Q_0) \setminus \{Q_0\}$,
if $\varepsilon$ is small enough. Recall that  $\widehat{Q}$ is the dyadic parent of $Q$.
This weak condition will be studied in a more general setting
in Part II.
\end{remark}

\section{Applications II: Generalized oscillations}\label{section:App-II}

Our next goal is to show that our good-$\lambda$ inequality allows us to consider other oscillations as well. We show that the previous applications can be translated into  a new context, where the classical oscillation $f-f_Q$ is replaced by another oscillation $B_Q f$ satisfying some conditions. In the present situation, and in view of the local character of the good-$\lambda$ inequality, all operators will be local. This should be compared with \cite{BeM}, where non-local oscillations are considered.

\begin{definition}\label{defi:local-osci}
Given a cube $Q_0\subset\re^n$ we say that the family $\mathbb{B}_{Q_0}:=\{B_Q\}_{Q\in\dd(Q_0)}$ is a local oscillation if, after setting $A_Q:=I-B_Q$, the following conditions hold:
\begin{enumerate}\itemsep=0.2cm
\item[(a)] For every $Q\in\dd(Q_0)$, $A_Q$ is a linear operator acting on functions in $L^1(Q_0)$.

\item[(b)] For every $Q\in\dd(Q_0)$ we have
\[
\|A_Q f\|_{L^\infty(Q)}\le C_{\mathbb{B}} \aver_{Q}|f(y)|\,dy.
\]

\item[(c)] For every $Q_1$, $Q_2\in\dd(Q_0)$ satisfying $Q_1\subset Q_2$ we have $B_{Q_1} A_{Q_2} f=0$ a.e. in $Q_1$ (equivalently, $A_{Q_1}A_{Q_2} f= A_{Q_2} f$ a.e. in $Q_1$).
\end{enumerate}

Notice that (a) and (b) imply that $(A_Q f)\chi_Q=A_Q(f\chi_Q)\chi_Q$ and that is why we say that the family is local.

\begin{example}\label{example1}
Set $A_Qf=f_Q\chi_Q$ and $B_Q=I-A_Q$. Then $\mathbb{B}_{Q_0}:=\{B_Q\}_{Q\in\dd(Q_0)}$ is clearly a local oscillation
\end{example}

\begin{example}
As in \cite{FPW}, for a fixed $m\ge 0$, we let $\mathcal{P}_m$ be the space of real-valued polynomials of degree at most $m$ which is generated by the linearly independent collection of polynomials $\mathcal{S}_m=\{x^\alpha\}_{|\alpha|\le m}$ where $\alpha=(\alpha_1,\dots,\alpha_n)$ is a multi-index and $x^\alpha=x_1^{\alpha_1}\cdots x_n^{\alpha_n}$. Let $Q_0$ be the cube with the center at the origin and side length $1$ and endow
$\mathcal{P}_m$ with the inner product
\[
\langle f, g\rangle_{Q_0}=\aver_{Q_0}fg\,dx=\int_{Q_0}fg\,dx.
\]
Then $(\mathcal{P}_m, \langle\cdot,\cdot\rangle_{Q_0})$ is a finite-dimensional Hilbert space. Using the Gram-Schmidt methods on $\mathcal{S}_m$ we can find $\mathcal{B}_m=\{\varphi_\alpha\}_{|\alpha|\le m}$, an orthonormal basis of $(\mathcal{P}_m, \langle\cdot,\cdot\rangle_{Q_0})$. Notice that since the space $\mathcal{P}_m$ is finite-dimensional, all norms on it are equivalent and therefore for every $\alpha$ with $|\alpha|\le m$, we have
$$
\|\varphi_\alpha\|_{L^\infty(Q_0)}
\le
C_m\|\varphi_\alpha\|_{L^2,Q_0}
=
C_m\langle \varphi_\alpha, \varphi_\alpha\rangle_{Q_0}
=
C_m.
$$
Let $Q$ be an arbitrary cube centered at $x_Q$ and whose side length is $\ell(Q)$. We set $\varphi_\alpha^Q(x)=\varphi((x-x_Q)/\ell(Q))$ for every $x\in Q$. It is straightforward to show that $\mathcal{B}_m^{Q}=\{\varphi_\alpha^Q\}_{|\alpha|\le m}$ is an orthonormal basis of the finite-dimensional  $(\mathcal{P}_m, \langle\cdot,\cdot\rangle_{Q})$ where the inner product is now
\[
\langle f, g\rangle_Q=\aver_{Q}fg\,dx.
\]
Also, it is trivial to check that for every $\alpha$ with $|\alpha|\le m$, we have
\begin{equation}\label{varphiQ-Linfty}
\|\varphi_\alpha^Q\|_{L^\infty(Q)}
=
\|\varphi_\alpha\|_{L^\infty(Q_0)}
\le
C_m
.
\end{equation}
We now set
$$
A_Q f(x)
=
\sum_{|\alpha|\le m} \langle f,\varphi_\alpha^Q\rangle_Q\varphi_\alpha^Q(x)\chi_Q(x).
$$
We shall see that if $B_Q=I-A_Q$, then $\mathbb{B}_{Q_0}=\{B_Q\}_{Q\in\dd(Q_0)}$ is a local oscillation. Notice that \eqref{varphiQ-Linfty} implies that $A_Q$ is a linear operator, well-defined for every $f\in L^1(Q_0)$ and it satisfies
\begin{multline*}
\|A_Q f\|_{L^\infty(Q)}
\le
\sum_{|\alpha|\le m} \big|\langle f,\varphi_\alpha^Q\rangle_Q\big|\|\varphi_\alpha^Q\|_{L^\infty(Q)}
\\
\le
\sum_{|\alpha|\le m} \|\varphi_\alpha^Q\|_{L^\infty(Q)}^2\aver_Q|f(y)|\,dy
\le
C_m' \aver_Q|f(y)|\,dy.
\end{multline*}
We finally check the item (c). We notice that $A_Q$ is a projection from $L^2(Q)$ onto the collection of polynomials of $\mathcal{P}_m$ restricted to $Q$. In particular, if $\pi\in \mathcal{P}_m$ then $A_Q\pi=\pi\chi_Q$. Thus, if $Q_1, Q_2\in\dd(Q_0)$ with $Q_1\subset Q_2$, we have that
$A_{Q_2} f=\pi_f\chi_{Q_2}$ with $\pi_f\in \mathcal{P}_m$, and, for every $x\in Q_1$,
$$
A_{Q_1} A_{Q_2} f(x)
=
A_{Q_1} (\pi_f\chi_{Q_2})(x)
=
A_{Q_1} (\pi_f)(x)
=
\pi_f(x)\chi_{Q_1}(x)
=
A_{Q_2} f(x),
$$
which is the desired property.

We notice that if $m=0$, then $\varphi_\alpha^Q\equiv 1$ and $A_Q f=f_Q\chi_Q$, and we are back at Example \ref{example1}.
\end{example}
\end{definition}

\subsection{John-Nirenberg spaces  for local oscillations}

Let $Q_0\subset\re^n$ be a cube  and $\mathbb{B}_{Q_0}$be  a local oscillation.
We say that $f\in JN_{\mathbb{B}_{Q_0}, p}^\dya(Q_0)$, provided $f\in L^1(Q_0)$ and
\begin{align}\label{JNp:dya:B-aa}
\|f\|_{JN_{\mathbb{B}_{Q_0}, p}^\dya(Q_0)}
:&=\sup_{Q\in\dd(Q_0)} \|f\|_{JN_{\mathbb{B}_{Q_0}, p}^\dya, Q}<\infty,
\end{align}
where
\begin{multline}\label{JNp:dya:B}
\|f\|_{JN_{\mathbb{B}_{Q_0}, p}^\dya,Q}
:=
\sup\left(\frac1{|Q|}\sum_i \left(\aver_{Q_i} |B_{Q_i}f(x)|\,dx\right)^{p}|Q_i|\right)^{1/p}
\\ 
=
\sup \left\|
\sum_i \left(\aver_{Q_i} |B_{Q_i}f(x)|\,dx\right)\chi_{Q_i}
\right\|_{L^p,Q}.
\end{multline}
Here the suprema are taken over all pairwise disjoint families $\{Q_i\}_i\subset\dd(Q)$.
We show that $JN_{\mathbb{B}_{Q_0}, p}^\dya(Q_0) \hookrightarrow L^{p,\infty}(Q_0)$ whenever $1<p<\infty$.

\begin{corollary}\label{corol:JNp:B}
Let  $1<p<\infty$. Fix a cube $Q_0\subset\re^n$ and let $\mathbb{B}_{Q_0}$ be a local oscillation.
For every  $Q\in\dd(Q_0)$ and $f\in L^1(Q)$ we have
\begin{equation}\label{JNp:weak:Lp:B}
\|B_{Q} f\|_{L^{p,\infty},Q}
\le
C_{\mathbb{B}}
\|f\|_{JN_{\mathbb{B}_{Q_0}, p}^\dya, Q}.
\end{equation}
\end{corollary}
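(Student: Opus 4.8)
The plan is to follow the proof of Corollary~\ref{corol:JNp} almost verbatim, replacing the classical oscillation $f-f_{Q_0}$ by $B_Qf$ and using property~(c) of the local oscillation to produce the Calder\'on--Zygmund type splitting demanded by Theorem~\ref{theor:good-lambda:dyadic}. Fix $Q\in\dd(Q_0)$ and $f\in L^1(Q)$; extending $f$ by zero we may regard $f\in L^1(Q_0)$, so that $A_Qf$ is defined, $\|A_Qf\|_{L^\infty(Q)}\le C_{\mathbb{B}}\aver_Q|f|<\infty$ by~(b), and hence $F:=|B_Qf|\in L^1(Q)$. We shall apply Theorem~\ref{theor:good-lambda:dyadic} to $F$, with $Q$ in the role of the ambient cube $Q_0$ there.

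For $Q'\in\dd(Q)\setminus\{Q\}$, linearity of $A_{Q'}$ (property~(a)) together with the compatibility relation $A_{Q'}A_Qf=A_Qf$ a.e.\ in $Q'$ (property~(c), applied to $Q'\subset Q$) gives
\[
A_{Q'}B_Qf=A_{Q'}f-A_{Q'}A_Qf=A_{Q'}f-A_Qf\qquad\text{a.e.\ in }Q',
\]
and therefore $B_Qf=(f-A_{Q'}f)+(A_{Q'}f-A_Qf)=B_{Q'}f+A_{Q'}B_Qf$ a.e.\ in $Q'$. Taking absolute values yields $F\le G^{Q'}+H^{Q'}$ on $Q'$ with $G^{Q'}:=|B_{Q'}f|$ and $H^{Q'}:=|A_{Q'}B_Qf|$, which is hypothesis~(i). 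By~(b) applied to $B_Qf\in L^1(Q_0)$ and dyadic doubling,
\[
\|H^{Q'}\|_{L^\infty(Q')}\le C_{\mathbb{B}}\aver_{Q'}|B_Qf|\le 2^n C_{\mathbb{B}}\aver_{\widehat{Q'}}F,
\]
which is hypothesis~(ii) with $\Theta=2^n C_{\mathbb{B}}$; and $\aver_{Q'}G^{Q'}=\aver_{Q'}|B_{Q'}f|=:g^{Q'}$, so hypothesis~(iii) holds with $\delta=0$. With these choices
\[
G_Q^*(x)=\sup_{x\in Q'\in\dd(Q)}\aver_{Q'}|B_{Q'}f(y)|\,dy,
\]
the dyadic sharp maximal function adapted to the oscillation $\mathbb{B}_{Q_0}$.

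Since $\delta=0$, Theorem~\ref{theor:good-lambda:dyadic} applies with any $p>1$, and~\eqref{eqn:good-lambda:weak-Lp} (with $Q$ in place of $Q_0$) gives $\|B_Qf\|_{L^{p,\infty},Q}\lesssim\|G_Q^*\|_{L^{p,\infty},Q}+\aver_Q|B_Qf|$. Both terms on the right are $\lesssim\|f\|_{JN_{\mathbb{B}_{Q_0},p}^\dya,Q}$: for the second this is immediate from the trivial family $\{Q\}$ in the supremum defining the norm, and for the first we run the stopping-time argument of Corollary~\ref{corol:JNp}. With $\lambda_0:=\aver_Q|B_Qf|$ and $\Omega_\lambda:=\{x\in Q:G_Q^*(x)>\lambda\}$, for $\lambda\ge\lambda_0$ one subdivides $Q$ dyadically and stops the first time $\aver_{Q'}|B_{Q'}f|>\lambda$, which produces a pairwise disjoint family $\{Q_i\}_i\subset\dd(Q)\setminus\{Q\}$ of maximal stopping cubes with $\Omega_\lambda=\bigcup_iQ_i$, whence
\[
\lambda^p\frac{|\Omega_\lambda|}{|Q|}=\frac{\lambda^p}{|Q|}\sum_i|Q_i|\le\frac1{|Q|}\sum_i\Bigl(\aver_{Q_i}|B_{Q_i}f|\,dx\Bigr)^p|Q_i|\le\|f\|_{JN_{\mathbb{B}_{Q_0},p}^\dya,Q}^p;
\]
for $0<\lambda\le\lambda_0$ one has $\lambda^p|\Omega_\lambda|/|Q|\le\lambda_0^p\le\|f\|_{JN_{\mathbb{B}_{Q_0},p}^\dya,Q}^p$. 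Taking the supremum over $\lambda>0$ gives $\|G_Q^*\|_{L^{p,\infty},Q}\le\|f\|_{JN_{\mathbb{B}_{Q_0},p}^\dya,Q}$, and combining the estimates proves~\eqref{JNp:weak:Lp:B} with a constant depending only on $n$, $p$ and $C_{\mathbb{B}}$.

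The only genuinely new ingredient compared with Corollary~\ref{corol:JNp} is the identity $B_Qf=B_{Q'}f+A_{Q'}B_Qf$ on $Q'$, so the one real point of the proof is the use of the compatibility condition~(c); one should also keep in mind that $A_{Q'}$ is applied to $B_Qf$, which lies in $L^1(Q_0)$ after the zero extension, so that~(b) is legitimately available. Verification of (i)--(iii), the invocation of Theorem~\ref{theor:good-lambda:dyadic}, and the stopping-time estimate are then routine.
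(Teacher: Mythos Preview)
Your proof is correct and follows essentially the same route as the paper's own argument: the decomposition $B_Qf=B_{Q'}f+A_{Q'}B_Qf$ on $Q'$ via property~(c), the verification of (i)--(iii) with $\Theta=2^nC_{\mathbb{B}}$ and $\delta=0$, the application of Theorem~\ref{theor:good-lambda:dyadic}, and the stopping-time bound on the sharp maximal function all match the paper step for step. The only cosmetic difference is that you work directly on an arbitrary $Q\in\dd(Q_0)$ (handling $f\in L^1(Q)$ by zero extension), whereas the paper writes the argument for $Q_0$ itself.
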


\begin{proof}
We shall apply Theorem \ref{theor:good-lambda:dyadic}. Fix $Q_0\subset \re^n$, $f\in L^1(Q_0)$ and assume that $\|f\|_{JN_{\mathbb{B}_{Q_0}, p}^\dya, Q}<\infty$. Note that directly from the definition
$$
\aver_{Q_0}|B_{Q_0}f(x)|\,dx
\le
\|f\|_{JN_{\mathbb{B}_{Q_0}, p}^\dya, Q_0}<\infty.
$$
Thus $F(x):=|B_{Q_0}f(x)|\in L^1(Q_0)$. For every  $Q\in\dd(Q_0)\setminus\{Q_0\}$ we have
$$
F(x)
=
|B_{Q_0}f(x)|
\le
|B_Q f(x)|+ |A_Q f (x)-A_{Q_0}f(x)|
=: G^Q(x)+H^Q(x).
$$
By Definition \ref{defi:local-osci} it follows that
\begin{multline*}
\|H_Q\|_{L^\infty(Q)}
=
\|A_Q f -A_{Q_0}f\|_{L^\infty(Q)}
=
\|A_Qf  -A_Q A_{Q_0}f\|_{L^\infty(Q)}
\\
=
\|A_Q B_{Q_0}f \|_{L^\infty(Q)}
\le
C_{\mathbb{B}}\aver_{Q}|B_{Q_0}w(x)|\,dx
\le
2^nC_{\mathbb{B}} \aver_{\widehat{Q}}F(x)\,dx,
\end{multline*}
which is (ii) in Theorem \ref{theor:good-lambda:dyadic} with $\Theta=2^nC_{\mathbb{B}}$.
Moreover,
$$
\aver_{Q} G^Q(x)\,dx
=
\aver_{Q} |B_Qf(x)|\,dx
=:g^Q
$$
which is (iii)  in Theorem \ref{theor:good-lambda:dyadic} with $\delta=0$. Note that
$$
G_{Q_0}^*(x)
=
\sup_{x\in Q\in\dd(Q_0)} g^Q
=\sup_{x\in Q\in\dd(Q_0)} \aver_{Q} |B_Qf(x)|\,dx
=
:\M^\#_{\mathbb{B}_{Q_0}} f(x),
$$
which is the dyadic and localized sharp maximal function associated with the oscillation $\mathbb{B}_{Q_0}$.
Theorem \ref{theor:good-lambda:dyadic} (with any $1<p<\infty$, since $\delta=0$) implies
\begin{multline*}
\|B_{Q_0} f\|_{L^{p,\infty},Q_0}
\lesssim
\|G_{Q_0}^*\|_{L^{p,\infty},Q_0}+ F_{Q_0}
\\
=
\|\M^\#_{\mathbb{B}_{Q_0}} f\|_{L^{p,\infty},Q_0}+\aver_{Q_0}|B_{Q_0}f(x)|\,dx
\le
2\|\M^\#_{\mathbb{B}_{Q_0}} f\|_{L^{p,\infty},Q_0}.
\end{multline*}

To complete the proof, let
$$
\lambda\ge \lambda_0:=\aver_{Q_0}|B_{Q_0}f(x)|\,dx,
$$
and consider the distribution set
$
\Omega_\lambda
=
\{x\in Q_0: \M^\#_{\mathbb{B}_{Q_0}}f(x)>\lambda\}
$.
Subdivide dyadically $Q_0$ and stop whenever 
\begin{equation}
\aver_{Q}|B_Q f|\,dx>\lambda.
\label{eq:stop-crite}
\end{equation}
This defines a family of Calder\'on-Zygmund cubes $\{Q_i\}_i\subset\dd(Q_0)$ which are maximal, and therefore pairwise disjoint,  with respect to the stopping criterion. By our choice of $\lambda$ we have that $\{Q_i\}_i\subset\dd(Q_0)\setminus\{Q_0\}$. Notice that $\Omega_\lambda=\cup_i Q_i$. Then, using that $Q_i$ satisfies \eqref{eq:stop-crite} we have
$$
\lambda^p\frac{|\Omega_\lambda|}{|Q_0|}
=
\frac{\lambda^p}{|Q_0|}\sum_i |Q_i|
\le
\frac1{|Q_0|}\sum_i \left(\aver_{Q_i} |B_{Q_i}f(x)|\,dx\right)^{p}|Q_i|
\le
\|f\|_{JN^\dya_{\mathbb{B}_{Q_0},p}, Q_0}^p,
$$
since $\{Q_i\}_i\subset\dd(Q_0)$ is a pairwise disjoint family. On the other hand, if $0<\lambda<\lambda_0$, by the definition of the $JN^\dya_{\mathbb{B}_{Q_0},p}$ norm we immediately have
$$
\lambda^p\frac{|\Omega_\lambda|}{|Q_0|}
\le
\lambda^p
\le
\left(\aver_{Q_0}|B_{Q_0}f(x)|\,dx\right)^p
\le
\|f\|_{JN^\dya_{\mathbb{B}_{Q_0},p}, Q_0}^p.
$$
Collecting the two cases and taking the supremum in $\lambda>0$ we conclude that
$$
\|\M^\#_{\mathbb{B}_{Q_0}}f\|_{L^{p,\infty},Q_0}
\le
\|f\|_{JN^\dya_{\mathbb{B}_{Q_0},p}, Q_0}
$$
and thus
$$
\|B_{Q_0}f\|_{L^{p,\infty},Q_0}
\lesssim
\|\M^\#_{\mathbb{B}_{Q_0}}f\|_{L^{p,\infty},Q_0}
\le
\|f\|_{JN^\dya_{\mathbb{B}_{Q_0},p}, Q_0}.
$$
\end{proof}

Note that the previous corollary implies that $JN_{\mathbb{B}_{Q_0}, p}^\dya(Q_0) \hookrightarrow L^{p,\infty}(Q_0)$, because
\begin{align*}
\|f\|_{L^{p,\infty},Q_0}
\le
\|B_{Q_0}f\|_{L^{p,\infty},Q_0}+\|A_{Q_0}f\|_{L^{p,\infty},Q_0}
\lesssim
\|f\|_{JN^\dya_{\mathbb{B}_{Q_0},p}, Q_0}+\aver_{Q_0}|f(x)|\,dx<\infty.
\end{align*}

\subsection{Franchi-P\'erez-Wheeden self-improvement for local oscillations}

As before we immediately obtain the following consequence of Corollary \ref{corol:JNp:B}.

\begin{corollary}\label{corol:Dp-dyadic:B}
Fix a cube $Q_0\subset\re^n$ and a local oscillation $\mathbb{B}_{Q_0}$. Let $f\in L^1(Q_0)$  be such that
\begin{equation}\label{Poincare:gener:corol:B}
\aver_Q|B_Q f(x)|\,dx
\le
a(Q)
\end{equation}
for every $Q\in\dd(Q_0)$,
where $a:\dd(Q_0)\longrightarrow [0,\infty)$ is a functional \textup{(}depending probably on $f$\textup{)}.
Let $1<p<\infty$. If  $a\in D_p^\dya(Q_0)$, then, for every $Q\in\dd(Q_0)$,
\begin{equation}\label{Poincare:gener:corol:Lp:B}
\|B_Q\|_{L^{p,\infty},Q}
\lesssim
\|a\|_{D_p^\dya(Q_0)}\,
a(Q).
\end{equation}
\end{corollary}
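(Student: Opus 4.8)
The plan is to mimic the proof of Corollary \ref{corol:Dp-dyadic}, replacing the classical John–Nirenberg embedding of Corollary \ref{corol:JNp} by its local-oscillation counterpart, Corollary \ref{corol:JNp:B}. First I would fix $Q\in\dd(Q_0)$ and translate the $D_p^\dya$ hypothesis on $a$ into a bound for the generalized John–Nirenberg seminorm of $f$ on $Q$. Concretely, for any pairwise disjoint family $\{Q_i\}_i\subset\dd(Q)$, each $Q_i$ lies in $\dd(Q_0)$, so \eqref{Poincare:gener:corol:B} applies and gives $\aver_{Q_i}|B_{Q_i}f(x)|\,dx\le a(Q_i)$; inserting this into the definition \eqref{JNp:dya:B} and using the definition of $\|a\|_{D_p,Q}$ together with $\|a\|_{D_p,Q}\le\|a\|_{D_p^\dya(Q_0)}$ yields
$$
\Big(\frac1{|Q|}\sum_i\big(\aver_{Q_i}|B_{Q_i}f(x)|\,dx\big)^p|Q_i|\Big)^{1/p}\le\Big(\frac1{|Q|}\sum_i a(Q_i)^p|Q_i|\Big)^{1/p}\le\|a\|_{D_p,Q}\,a(Q)\le\|a\|_{D_p^\dya(Q_0)}\,a(Q).
$$
Taking the supremum over all such families gives $\|f\|_{JN_{\mathbb{B}_{Q_0},p}^\dya,Q}\le\|a\|_{D_p^\dya(Q_0)}\,a(Q)$.

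Next I would simply invoke Corollary \ref{corol:JNp:B} on the cube $Q$ — the statement there is valid for every $Q\in\dd(Q_0)$ — which gives $\|B_Q f\|_{L^{p,\infty},Q}\le C_{\mathbb{B}}\,\|f\|_{JN_{\mathbb{B}_{Q_0},p}^\dya,Q}$. Chaining this with the previous display produces
$$
\|B_Q f\|_{L^{p,\infty},Q}\le C_{\mathbb{B}}\,\|f\|_{JN_{\mathbb{B}_{Q_0},p}^\dya,Q}\le C_{\mathbb{B}}\,\|a\|_{D_p^\dya(Q_0)}\,a(Q),
$$
which is precisely \eqref{Poincare:gener:corol:Lp:B} (the implicit constant in $\lesssim$ being $C_{\mathbb{B}}$).

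There is essentially no obstacle here: the entire self-improvement mechanism has already been packaged into Theorem \ref{theor:good-lambda:dyadic} and extracted, in the local-oscillation form, in Corollary \ref{corol:JNp:B}; what remains is only the bookkeeping that matches the discrete $D_p$ summation condition to the seminorm $\|\cdot\|_{JN_{\mathbb{B}_{Q_0},p}^\dya,Q}$. The only minor point to keep track of is that \eqref{Poincare:gener:corol:B} is assumed for \emph{all} dyadic subcubes of $Q_0$, which is exactly what makes it available for every $Q_i$ occurring in the supremum defining $\|f\|_{JN_{\mathbb{B}_{Q_0},p}^\dya,Q}$.
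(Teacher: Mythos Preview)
Your proposal is correct and follows essentially the same route as the paper: bound $\|f\|_{JN_{\mathbb{B}_{Q_0},p}^\dya,Q}$ by $\|a\|_{D_p^\dya(Q_0)}\,a(Q)$ using \eqref{Poincare:gener:corol:B} on each $Q_i$, then apply Corollary \ref{corol:JNp:B}. The only cosmetic difference is that the paper first disposes of the trivial case $a(Q)=\infty$.
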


\begin{proof}
Fix $Q\in\dd(Q_0)$ and assume that $a(Q)<\infty$; otherwise there is nothing to prove.
We first observe that \eqref{Poincare:gener:corol:B} implies
\begin{multline*}
\|f\|_{JN^\dya_{\mathbb{B}_{Q_0},p}, Q}
=
\sup_{\{Q_i\}_i\subset\dd(Q)} \left(\frac1{|Q|}\sum_i \left(\aver_{Q_i} |B_{Q_i}f(x)|\,dx\right)^{p}|Q_i|\right)^{1/p}
\\
\le
\sup_{\{Q_i\}_i\subset\dd(Q)} \left(\frac1{|Q|}\sum_i a(Q_i)^p|Q_i|\right)^{1/p}
\le
\|a\|_{D_p^\dya(Q_0)}a(Q)<\infty.
\end{multline*}
This and \eqref{JNp:weak:Lp:B} immediately give the desired estimate
$$
\|B_{Q}f\|_{L^{p,\infty},Q}
\lesssim
\|f\|_{JN^\dya_{\mathbb{B}_{Q_0},p}, Q}
\lesssim
\|a\|_{D_p^\dya(Q_0)}a(Q).
$$
\end{proof}

\subsection{Gurov-Reshetnyak classes for local oscillations}

Fix a cube $Q_0\subset\re^n$, a local oscillation  $\mathbb{B}_{Q_0}$ and let $0\le w\in L^1(Q_0)$.
We say  that  $w\in GR^\dya_{\mathbb{B}_{Q_0},\varepsilon}(Q_0)$, if
\begin{equation}\label{GR-dyadic:B}
\aver_Q|B_Q w(x)|\,dx
=
\aver_Q|w(x)-A_Q w(x)|\,dx
\le
\varepsilon\aver_{Q}|A_Q w(x)|\,dx
\end{equation}
for every $Q\in\dd(Q_0)$.

Let us observe that one could have defined $GR^\dya_{\mathbb{B}_{Q_0},\varepsilon}(Q_0)$ with a right hand term of the form
$\varepsilon w_Q$. This would lead us to an equivalent definition (provided $\varepsilon$ is small enough). Indeed,
\eqref{GR-dyadic:B} and Definition \ref{defi:local-osci} imply that
$$
\aver_Q|B_Q w(x)|\,dx
\le
\varepsilon\aver_{Q}|A_Q w(x)|\,dx
\le
\varepsilon C_{\mathbb{B}}\,w_Q.
$$
Conversely, if
$$
\aver_Q|B_Q w|\,dx\le \varepsilon'w_Q
$$
with $0<\varepsilon'<1/2$, then
$$
\aver_Q|B_Q w(x)|\,dx
\le
\varepsilon'w_Q\,dx
\le
\frac12\aver_Q|B_Q w(x)|\,dx
+
\varepsilon'\aver_Q|A_Q w(x)|\,dx,
$$
and the first term in the last quantity can be absorbed  to obtain \eqref{GR-dyadic:B} with $\varepsilon=2\varepsilon'$.

\begin{corollary}\label{corol:GR-dyadic:B}
Fix a cube $Q_0\subset\re^n$,  a local oscillation $\mathbb{B}_{Q_0}$ and $w\in GR^\dya_{\mathbb{B}_{Q_0},\varepsilon}(Q_0)$.
If $0<\varepsilon<1$ is small enough \textup{(}for instance,  $0<\varepsilon<2^{-(n+2)}C_{\mathbb{B}}^{-1}$\textup{)}, there exists $p(\varepsilon)>1$ such that for every $1\le p<p(\varepsilon)$,
\begin{equation}\label{GR-p-dyadic:B}
\left(\aver_Q|B_Q w(x)|^p\,dx\right)^{1/p}
\le
C\varepsilon\aver_{Q}|A_Q w(x)|\,dx
\end{equation}
for every $Q\in\dd(Q_0)$.
Moreover, $p(\varepsilon)\to\infty$ as $\varepsilon\to 0^+$.
Therefore $w\in RH_p^\dya(Q_0)$ for every $1\le p<p(\varepsilon)$, that is,
\begin{equation}\label{RH-p-dyadic:B}
\left(\aver_Q w(x)^p \,dx\right)^{1/p}
\lesssim
\aver_{Q} w(x)\,dx
\end{equation}
for every $Q\in\dd(Q_0)$.
\end{corollary}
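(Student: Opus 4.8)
The plan is to follow the proof of Corollary~\ref{corol:GR}, but working with the local oscillation exactly as in the proof of Corollary~\ref{corol:JNp:B}, and then to feed the conclusion into \eqref{eqn:good-lambda:Lp}. First I would reduce to the case $Q=Q_0$. For any $Q\in\dd(Q_0)$ the restriction of $\mathbb{B}_{Q_0}$ to $\dd(Q)$, namely $\mathbb{B}_Q:=\{B_{Q'}\}_{Q'\in\dd(Q)}$, is again a local oscillation on $Q$ with the \emph{same} constant $C_{\mathbb{B}}$ (conditions (a)--(c) in Definition~\ref{defi:local-osci} are inherited by subfamilies), and since \eqref{GR-dyadic:B} is assumed for all $Q'\in\dd(Q_0)\supseteq\dd(Q)$, we get $w\in GR^\dya_{\mathbb{B}_Q,\varepsilon}(Q)$. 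Hence, once \eqref{GR-p-dyadic:B} is established for a cube and its associated oscillation, the general case follows by applying it to $(Q,\mathbb{B}_Q)$.

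Next I would verify the hypotheses of Theorem~\ref{theor:good-lambda:dyadic} for $F:=|B_{Q_0}w|$, which lies in $L^1(Q_0)$ because $\aver_{Q_0}F\le\varepsilon\aver_{Q_0}|A_{Q_0}w|\le\varepsilon C_{\mathbb{B}}\,w_{Q_0}<\infty$ by \eqref{GR-dyadic:B} for $Q_0$ and condition (b). For $Q\in\dd(Q_0)\setminus\{Q_0\}$ I split, as in Corollary~\ref{corol:JNp:B},
\[
F(x)=|B_{Q_0}w(x)|\le|B_Q w(x)|+|A_Q w(x)-A_{Q_0}w(x)|=:G^Q(x)+H^Q(x),
\]
which gives (i); using condition (c) in the form $A_Q A_{Q_0}w=A_{Q_0}w$ a.e.\ on $Q$, linearity, and condition (b),
\[
\|H^Q\|_{L^\infty(Q)}=\|A_Q B_{Q_0}w\|_{L^\infty(Q)}\le C_{\mathbb{B}}\aver_Q|B_{Q_0}w|\,dx\le 2^nC_{\mathbb{B}}\aver_{\widehat{Q}}F(x)\,dx,
\]
which is (ii) with $\Theta=2^nC_{\mathbb{B}}$. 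For (iii) I would use \eqref{GR-dyadic:B}, condition (b), the decomposition $w=B_{Q_0}w+A_{Q_0}w$, and $\aver_Q F\le 2^n\aver_{\widehat{Q}}F$ to get
\[
\aver_Q G^Q\,dx=\aver_Q|B_Q w|\,dx\le\varepsilon\aver_Q|A_Q w|\,dx\le\varepsilon C_{\mathbb{B}}\,w_Q\le 2^nC_{\mathbb{B}}\,\varepsilon\aver_{\widehat{Q}}F\,dx+C_{\mathbb{B}}^2\,\varepsilon\,w_{Q_0},
\]
since $w_Q\le\aver_Q|B_{Q_0}w|+\aver_Q|A_{Q_0}w|\le 2^n\aver_{\widehat{Q}}F+C_{\mathbb{B}}\,w_{Q_0}$; thus (iii) holds with $\delta=2^nC_{\mathbb{B}}\,\varepsilon$ and $g^Q=C_{\mathbb{B}}^2\,\varepsilon\,w_{Q_0}$, and $G_{Q_0}^*\equiv C_{\mathbb{B}}^2\,\varepsilon\,w_{Q_0}$ is a constant.

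To conclude, when $0<\varepsilon<2^{-(n+2)}C_{\mathbb{B}}^{-1}$ we have $\delta<2^{-1}$, so the theorem applies; set $p(\varepsilon):=1+\log(1/(2\delta))/\log(2\Theta)$, which is $>1$ and satisfies $p(\varepsilon)\to\infty$ as $\varepsilon\to0^+$. For $1<p<p(\varepsilon)$, \eqref{eqn:good-lambda:Lp} gives
\[
\|B_{Q_0}w\|_{L^p,Q_0}\le\|\M_{Q_0}F\|_{L^p,Q_0}\lesssim\|G_{Q_0}^*\|_{L^p,Q_0}+\aver_{Q_0}|B_{Q_0}w|\,dx\lesssim\varepsilon\,w_{Q_0}\lesssim\varepsilon\aver_{Q_0}|A_{Q_0}w|\,dx,
\]
where the last inequality uses $w_{Q_0}\le\aver_{Q_0}|B_{Q_0}w|+\aver_{Q_0}|A_{Q_0}w|\le(1+\varepsilon)\aver_{Q_0}|A_{Q_0}w|$; this is \eqref{GR-p-dyadic:B} for $Q_0$ (and $p=1$ is just \eqref{GR-dyadic:B}). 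By the reduction above, \eqref{GR-p-dyadic:B} then holds for every $Q\in\dd(Q_0)$. Finally, \eqref{RH-p-dyadic:B} follows from $\|w\|_{L^p,Q}\le\|B_Q w\|_{L^p,Q}+\|A_Q w\|_{L^p,Q}\le C\varepsilon\aver_Q|A_Q w|\,dx+C_{\mathbb{B}}\,w_Q\lesssim w_Q$, using condition (b) once more.

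The only step that needs a bit of care is the verification of (iii): one has to recast the right-hand side $\varepsilon\aver_Q|A_Q w|$ of \eqref{GR-dyadic:B} into the shape $\delta\aver_{\widehat{Q}}F+g^Q$ demanded by Theorem~\ref{theor:good-lambda:dyadic}, and this is precisely where the additive splitting $w=B_{Q_0}w+A_{Q_0}w$ together with the $L^\infty$-bound of condition (b) is needed; everything else is routine bookkeeping parallel to Corollaries~\ref{corol:GR} and~\ref{corol:JNp:B}.
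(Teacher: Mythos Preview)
Your proof is correct and follows essentially the same route as the paper: both apply Theorem~\ref{theor:good-lambda:dyadic} to $F=|B_{Q_0}w|$, verify (ii) with $\Theta=2^nC_{\mathbb{B}}$ via $A_Q B_{Q_0}w$, and obtain (iii) with $\delta=2^nC_{\mathbb{B}}\varepsilon$ by splitting $w=B_{Q_0}w+A_{Q_0}w$. The only cosmetic difference is that you take $g^Q=C_{\mathbb{B}}^2\varepsilon\,w_{Q_0}$ and convert to $\aver_{Q_0}|A_{Q_0}w|$ at the end, whereas the paper carries out that conversion one step earlier and ends up with $g^Q=\varepsilon(1+\varepsilon)C_{\mathbb{B}}^2\aver_{Q_0}|A_{Q_0}w|$; the resulting estimates are equivalent.
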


\begin{proof}
We first observe that \eqref{GR-p-dyadic:B}, the triangle inequality and Definition \ref{defi:local-osci} imply \eqref{RH-p-dyadic:B}:
\begin{multline*}
\left(\aver_Q w(x)^p \,dx\right)^{1/p}
\le
\left(\aver_Q |B_Q w(x)|^p \,dx\right)^{1/p}
+
\left(\aver_Q |A_Q w(x)|^p \,dx\right)^{1/p}
\\
\le
(1+C\,\varepsilon)\left(\aver_Q |A_Q w(x)|^p \,dx\right)^{1/p}
\le
(1+C\,\varepsilon)\,C_{\mathbb{B}} \aver_Q w(x) \,dx.
\end{multline*}

To obtain \eqref{GR-p-dyadic:B} we note that we can just prove it for $Q_0$ (for an arbitrary $Q\in\dd(Q_0)$ we simply repeat the argument with $Q$ in place of $Q_0$). We shall apply Theorem \ref{theor:good-lambda:dyadic}. By  assumption $F(x):=|B_{Q_0}w(x)|\in L^1(Q_0)$. Let $Q\in\dd(Q_0)\setminus\{Q_0\}$ and, for every $x\in Q$, we write
$$
F(x)
=
|w(x)-A_{Q_0}w (x)|
\le
|B_Q w(x)|+ |A_Qw (x)-A_{Q_0}w(x)|
=: G^Q(x)+H^Q(x)
$$
Note that by Definition \ref{defi:local-osci} we have
\begin{multline*}
\|H_Q\|_{L^\infty(Q)}
=
\|A_Qw -A_{Q_0}w\|_{L^\infty(Q)}
=
\|A_Qw -A_Q A_{Q_0}w\|_{L^\infty(Q)}
\\
=
\|A_Q B_{Q_0}w \|_{L^\infty(Q)}
\le
C_{\mathbb{B}}\aver_{Q}|B_{Q_0}w(x)|\,dx
\le
2^nC_{\mathbb{B}} \aver_{\widehat{Q}}F(x)\,dx,
\end{multline*}
which is (ii) in Theorem \ref{theor:good-lambda:dyadic} with $\Theta=2^nC_{\mathbb{B}}$. By \eqref{GR-dyadic:B} and Definition \ref{defi:local-osci} we obtain
\begin{align*}
\aver_{Q} G^Q(x)\,dx
&=
\aver_{Q} |B_Q w(x)|\,dx
\le
\varepsilon\aver_{Q}|A_Q f(x)|\,dx
\le
\varepsilon\, C_{\mathbb{B}}\aver_{Q}|f(x)|\,dx
\\
&\le
\varepsilon\, C_{\mathbb{B}}\aver_{Q}|B_{Q_0}f(x)|\,dx
+
\varepsilon\, C_{\mathbb{B}}\aver_{Q}|A_{Q_0}f(x)|\,dx
\\
&\le
2^n\varepsilon\, C_{\mathbb{B}}\aver_{\widehat{Q}}F(x)\,dx
+
\varepsilon\, C_{\mathbb{B}}^2\aver_{Q_0}|f(x)|\,dx
\\
&
\le
2^n\varepsilon\, C_{\mathbb{B}}\aver_{\widehat{Q}}F(x)\,dx
+
\varepsilon\, C_{\mathbb{B}}^2\aver_{Q_0}|B_{Q_0}f(x)|\,dx
+
\varepsilon\, C_{\mathbb{B}}^2\aver_{Q_0}|A_{Q_0}f(x)|\,dx
\\
&
\le
2^n\varepsilon\, C_{\mathbb{B}}\aver_{\widehat{Q}}F(x)\,dx
+
\varepsilon\,(1+\varepsilon)\,C_{\mathbb{B}}^2\aver_{Q_0}|A_{Q_0}f(x)|\,dx
,
\end{align*}
which is (iii)  in Theorem \ref{theor:good-lambda:dyadic} with $\delta=2^n\,\varepsilon C_{\mathbb{B}}$ and
$$
g^Q=\varepsilon(1+\varepsilon)C_{\mathbb{B}}^2\aver_{Q_0}|A_{Q_0}f(x)|\,dx.
$$
We have
$$
G_{Q_0}^*\equiv \varepsilon(1+\varepsilon)C_{\mathbb{B}}^2\aver_{Q_0}|A_{Q_0}f(x)|\,dx.
$$
Assuming that $0<\varepsilon<(2^{n+1}\,C_{\mathbb{B}})^{-1}$ (that is, $0<\delta<2^{-1}$), set
$$
p(\varepsilon)
=
1+
\frac{\log(1/(2\,\delta))}{\,\log(2\,\Theta)}
=
\frac{\log (\varepsilon^{-1})}{\log (2^{n+1}C_{\mathbb{B}})}>1,
$$
and observe that $p(\varepsilon)\to\infty$ as $\varepsilon\to 0^+$. If we now take $1\le p<p(\varepsilon)$, \eqref{eqn:good-lambda:Lp} gives
\begin{multline*}
\|B_{Q_0}w\|_{L^p,Q_0}
=
\|F\|_{L^p,Q_0}
\lesssim
\|G_{Q_0}^*\|_{L^p,Q_0}+F_{Q_0}
\\
=
\varepsilon(1+\varepsilon)C_{\mathbb{B}}^2\aver_{Q_0}|A_{Q_0}f(x)|\,dx+\aver_{Q_0} |B_{Q_0}w(x)|\,dx
\lesssim
\varepsilon\aver_{Q_0}|A_{Q_0}f(x)|\,dx.
\end{multline*}
This shows \eqref{GR-p-dyadic:B} with $Q_0$ in place of $Q$ and the proof is complete.
\end{proof}

\section{Proof of Theorem \ref{theor:good-lambda:dyadic}}\label{section:proof:good-lambda-Q}
For each $\lambda>0$ we set $\Omega_\lambda = \{x\in Q_0: \M_{Q_0} F(x)> \lambda\}$.  Fixed $\lambda\ge F_{Q_0}$, we subdivide $Q_0$ dyadically and stop whenever $F_Q>\lambda$. This defines the family of Calder\'on-Zygmund cubes $\{Q_i\}_i\subset\dd(Q_0)$ which are maximal, and therefore pairwise disjoint,  with respect to the property $F_Q>\lambda$. By our choice of $\lambda$ we have that $\{Q_i\}_i\subset\dd(Q_0)\setminus\{Q_0\}$. Notice that $\Omega_\lambda=\cup_i Q_i$.
Let $K>\Theta\ge 1$, $0<\gamma<1$ and set
$$
E_\lambda
=\{x\in Q_0: \M_{Q_0} F(x)> K\lambda, G_{Q_0}^*(x)\le\lambda\gamma \}.
$$
Note that $E_\lambda\subset \Omega_\lambda$ and thus
$$
|E_\lambda|
=
|E_\lambda\cap\Omega_\lambda|
=
\sum_i |E_\lambda\cap Q_i|.
$$
We analyze each term individually. We may assume that $E_\lambda\cap Q_i\neq\emptyset$ (otherwise there is nothing to prove). Thus there is $\bar{x}_i\in E_\lambda\cap Q_i$. Note that for every $x\in Q_i$ we have that $\M_{Q_0} F(x)=\M_{Q_i} F(x)$ since $\M_{Q_0} F(x)>\lambda$ and, by the maximality of the Calder\'on-Zygmund cubes, $F_Q\le \lambda$ for any  cube $Q\in\dd(Q_0)$ with $Q_i\subsetneq Q$. Then, for every
$x\in E_\lambda\cap Q_i$, we can use (i), (ii) and the maximality of the Calder\'on-Zygmund cubes to obtain
\begin{multline*}
K\lambda
<
\M_{Q_0} F(x)
=
\M_{Q_i} F(x)
\le
\M_{Q_i} G^{Q_i}(x)
+
\M_{Q_i} H^{Q_i}(x)
\\
\le
\M_{Q_i} G^{Q_i}(x)
+
\Theta\aver_{\widehat{Q}_i} F\,dx
\le
\M_{Q_i} G^{Q_i}(x)
+
\Theta\lambda.
\end{multline*}
The weak-type $(1,1)$ estimate for $\M_{Q_i}$, (iii) and the fact that $\bar{x}_i\in E_\lambda\cap Q_i$ imply
\begin{align*}
|E_\lambda\cap Q_i|
&\le
\big|\{
x\in Q_i: \M_{Q_i} G^{Q_i}>(K-\Theta)\lambda
\}\big|
\le
\frac1{(K-\Theta)\lambda}\int_{Q_i} G^{Q_i}(x)\,dx
\\
&\le
\frac{|Q_i|}{(K-\Theta)\lambda}\left(\delta
\aver_{\widehat{Q}_i} F(x)\,dx
+
g^{Q_i}\right)
\\
&\le
\frac{|Q_i|}{(K-\Theta)\lambda}\left(\delta\lambda
+G_{Q_0}^*(\bar{x}_i)\right)
\le
\frac{\delta+\gamma}{K-\Theta}|Q_i|.
\end{align*}
Summing on $i$ we readily obtain \eqref{eqn:good-lambda}.

We next show \eqref{eqn:good-lambda:weak-Lp}. Note first that by \eqref{eqn:good-lambda} we have
\begin{align*}
|\Omega_{K\lambda}|
\le
|E_{\lambda}|+
\big|\{x\in Q_0: G_{Q_0}^*(x)>\gamma\lambda\}\big|
\le
\frac{\delta+\gamma}{K-\Theta}|\Omega_\lambda|
+
\big|\{x\in Q_0: G_{Q_0}^*(x)>\gamma\lambda\}\big|
\end{align*}
for every $\lambda\ge F_{Q_0}$.
%On the other hand, if $\lambda<F_{Q_0}$ we have
%$$
%|\Omega_{K\lambda}|
%\le
%|Q_0|
%\le
%\frac1{\lambda^p}(F_{Q_0})^p|Q_0|.
%$$
%Gathering both estimates we have for every $0<\lambda<\infty$.
Thus, for every $0<\lambda<\infty$,
\begin{equation}\label{eqn:goof-lambda-levelsets-full}
|\Omega_{K\lambda}|
\le
\frac{\delta+\gamma}{K-\Theta}|\Omega_\lambda|
+
\big|\{x\in Q_0: G_{Q_0}^*(x)>\gamma\lambda\}\big|
+
|Q_0|\chi_{\{0<\lambda<F_{Q_0}\}}(\lambda)
.
\end{equation}
For every $N$, and $1< p<1+\frac{\log(1/(2\,\delta))}{\,\log(2\,\Theta)}$ the previous estimate leads to
\begin{multline*}
I_N:
=\sup_{0<\lambda<N} \lambda^p\frac{|\Omega_\lambda|}{|Q_0|}
=
K^p\sup_{0<\lambda<N/K} \lambda^p\frac{|\Omega_{K\lambda}|}{|Q_0|}
\\
%&\le
%K^p\frac{\delta+\gamma}{K-\Theta} \sup_{0<\lambda<N/K} \lambda^p\frac{|\Omega_{\lambda}|}{|Q_0|}
%\\
%&\qquad\qquad+
%K^p\sup_{0<\lambda<\infty} \lambda^p\frac{\big|\{x\in Q_0: G_{Q_0}^*(x)\geq\gamma\lambda\}\big|}{|Q_0|}
%+
%K^p(F_{Q_0})^p
%\\
\le
(2\,\Theta)^p\frac{\delta+\gamma}{\Theta} I_N
+
\frac{(2\,\Theta)^p}{\gamma^p}\|G_{Q_0}^*\|_{L^{p,\infty}, Q_0}^p
+
(2\,\Theta)^p(F_{Q_0})^p,
%\\
%&\le
%\frac12I_N
%+
%\frac{K^p}{\gamma^p}\|G_{Q_0}^*\|_{L^{p,\infty}, Q_0}^p
%+
%K^p(F_{Q_0})^p,
\end{multline*}
where we have chosen $K=2\Theta$. Let us observe  that our choice of $p$ guarantees that $(2\,\Theta)^p\delta/\Theta<1$ and hence we can take $\gamma$ small enough so that $(2\,\Theta)^p(\delta+\gamma)/\Theta<1$. This and the fact that $I_N\le N^p<\infty$ allow us to absorb the first term in the last estimate to obtain
$$
I_N
\le
C_{\Theta,\delta, p}\,\big(\|G_{Q_0}^*\|_{L^{p,\infty}, Q_0}^p
+(F_{Q_0})^p\big).
$$
Finally we let $N\to\infty$ and use the Lebesgue differentiation theorem to obtain \eqref{eqn:good-lambda:weak-Lp}.

To obtain the strong type estimates we proceed analogously. We use \eqref{eqn:goof-lambda-levelsets-full} to show that
\begin{multline*}
I_N:
=\int_0^Np\lambda^p\frac{|\Omega_\lambda|}{|Q_0|}\frac{d\lambda}{\lambda}
=
K^p\int_0^{N/K} p\lambda^p\frac{|\Omega_{K\lambda}|}{|Q_0|}\frac{d\lambda}{\lambda}
%\\
%&\le
%K^p\frac{\delta+\gamma}{K-\Theta} \int_0^{N/K} p\lambda^p\frac{|\Omega_{\lambda}|}{|Q_0|}\frac{d\lambda}{\lambda}
%\\
%&\quad+
%K^p\int_0^{\infty} p \lambda^p\frac{\big|\{x\in Q_0: G_{Q_0}^*(x)\geq\gamma\lambda\}\big|}{|Q_0|}\frac{d\lambda}{\lambda}
%+
%K^p\int_0^\infty p\lambda^p \chi_{\{0<\lambda<F_{Q_0}\}}(\lambda)\frac{d\lambda}{\lambda}
\\
\le
(2\,\Theta)^p\frac{\delta+\gamma}{\Theta} I_N
+
\frac{(2\,\Theta)^p}{\gamma^p}\|G_{Q_0}^*\|_{L^{p}, Q_0}^p
+
(2\,\Theta)^p(F_{Q_0})^p,
%\\
%&\le
%\frac12I_N
%+
%\frac{K^p}{\gamma^p}\|G_{Q_0}^*\|_{L^{p}, Q_0}^p
%+
%K^p(F_{Q_0})^p.
\end{multline*}
where again $K=2\Theta$. From here \eqref{eqn:good-lambda:Lp} follows as before. \qed

\part{Spaces of homogeneous type}\label{part:II}

\section{The good $\lambda$-inequality}\label{section:good-lambda:X}

In the sequel $X=(X,d,\mu)$ is a metric space endowed with a metric $d$ and a
Borel regular doubling measure $\mu$ with $0<\mu(B)<\infty$ for all balls $B$.
 Actually, all of our results hold true in
spaces of homogeneous type equipped with a \emph{quasi}metric (see \cite{CoWe,StTo}),
but for simplicity of presentation we concentrate
on metric spaces.
A ball means an open ball which comes with a center
and a positive finite radius, that is,
$$
B=B(x_B,r_B)=\{y\in X:d(y,x_B)<r_B\}.
$$
The $\lambda$-dilate of $B$ is
defined by $\lambda B:=B(x_B,\lambda r_B)$ and
the doubling condition means that
$
\mu (2B)\leq c\,\mu(B)
$
for all balls $B$ in $X$. This implies that there exists $D>0$ such that
\begin{equation}\label{doub1}
\frac{\mu(B')}{\mu(B)} \leq c_\mu \left( \frac{r_{B'}}{r_B} \right)^D
\end{equation}
for every $B\subset B'$ with $r_{B}\le r_{B'}$.

Given a ball $B$ and a  fixed (small) $\eta >0$, we write
$\widehat{B}:=(1+\eta)B$.
Fix  a ball $B_0=B(x_{B_0}, r_{B_0})$ and consider the following family of balls
\begin{equation}\label{def-B-cal}
\Bs:=
\Bs_{B_0}
:=
\{B=B(x_B,r_B): x_B \in B_0 \text{ with } r_B \leq \eta\, r_{B_0}\}.
\end{equation}
It should be observed that
\begin{equation}\label{dila-Fam-B}
B \in \Bs\quad\mbox{and}\quad \tau\ge 1
\qquad\text{imply}\qquad B \subset \widehat{B}_0
\quad\mbox{and}\quad \tau B \subset \tau \widehat{B}_0.
\end{equation}
In the following we consider the Hardy-Littlewood maximal operator
$M_\Bs$ with respect to the basis $\Bs$, that is,
$$
M_\Bs f(x)
:=
M_{\Bs_{B_0}} f(x)
:=
\sup_{x \in B \in \Bs} \mvint_B |f| \,d\mu,
$$
with the convention that $M_\Bs f(x)=0$ if there is no ball in $\Bs$ containing the
point $x$. In particular, $M_\Bs f$ vanishes outside of $\widehat{B}_0$.
By the Lebesgue differentiation theorem we have $|f| \leq M_\Bs f$ $\mu$-a.e. in $B_0$.

\begin{theorem}\label{theor:good-lambda:X}
Fix a ball $B_0\subset X$ and consider the family $\Bs=\Bs_{B_0}$ as before. Let $0\le F\in L^1(\widehat{B}_0)$ and assume that there are constants $\Theta\ge 1$,  $0\le\delta<2^{-1}$ and $\tau\ge 1$ such that for every  $B \in \Bs$ there exist non-negative functions $H^B$, $G^B$ and
a constant $g^B\ge 0$ satisfying
\begin{enumerate}\itemsep=0.3cm
\item[(i)] $F(x) \leq G^B(x)+H^B (x)$ for $\mu$-a.e.~$x \in B$,

\item[(ii)]  $\displaystyle\| H^B \|_{L^\infty(B)} \leq \Theta \mvint_{\tau B} F(x)\,d\mu(x)$,

\item[(iii)] $\displaystyle\mvint_B G^B(x)\, d\mu(x) \leq \delta \mvint_{\tau B} F(x)\,d\mu(x) + g^B$.
\end{enumerate}
Define
$$
G_{\Bs}^* (x):=\sup_{x \in B \in \Bs} g^B
$$
with the convention that $G_{\Bs}^* (x)=0$ if there is no ball in
$\Bs$ containing the point $x$.

There is $\lambda_0=\lambda_0(\tau, \eta, \mu)$ such that if $\lambda\ge\lambda_0\, \aver_{\widehat{B}_0} F\,d\mu$, $K>\max\{\Theta, c_\mu\,3^D\}$ and $0<\gamma<1$, then
\begin{equation}\label{eqn:good-lambda:X}
\mu\big(\{x \in \widehat{B}_0: M_{\Bs} F(x) >K\lambda,
G_{\Bs}^*(x) \leq \gamma \lambda\}\big)
\le
C_\mu
\frac{\delta+\gamma}{K-\Theta}\mu\big( \{x \in \widehat{B}_0: M_{\Bs}F(x) >\lambda\}\big),
\end{equation}
where $C_\mu\ge 1$ depends only on $\mu$.

If $0\le \delta<\frac1{2\,C_\mu}\, \min\Big\{1, \frac{\Theta}{c_\mu\,3^D}\Big\}$ and $1<p<\frac{\log(1/(C_\mu\,\delta\, \Theta^{-1}))}{\log (2\,\max\{\Theta,c_\mu\,3^D\})}$ \textup{(}notice that if $\delta=0$ we can take any $p>1$\textup{)}, then
\begin{equation}\label{main1}
\|F\|_{L^{p,\infty},B_0}
\lesssim
\|M_\Bs F\|_{L^{p,\infty},\widehat{B}_0}
\lesssim \|G_{\Bs}^* \|_{L^{p,\infty},\widehat{B}_0}+F_{\widehat{B}_0}
\end{equation}
and
\begin{equation}\label{main2}
\|F\|_{L^{p},B_0}
\le
\|M_\Bs F\|_{L^{p},\widehat{B}_0}
\lesssim \|G_{\Bs}^*\|_{L^{p},\widehat{B}_0}+F_{\widehat{B}_0}.
\end{equation}
\end{theorem}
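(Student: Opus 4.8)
\emph{Strategy.}
The plan is to reproduce, step by step, the proof of Theorem~\ref{theor:good-lambda:dyadic} from Section~\ref{section:proof:good-lambda-Q}, replacing the Calder\'on--Zygmund cube decomposition by a covering of the superlevel set of $M_{\Bs}F$ by balls of $\Bs$. Three ingredients of the dyadic proof need metric surrogates: (1) a \emph{disjoint} family of stopping cubes whose union is exactly $\Omega_\lambda$; (2) the decomposition $F\le G^{Q_i}+H^{Q_i}$ holding on all of the stopping cube $Q_i$; and (3) the bound $F_{\widehat{Q}_i}\le\lambda$, which in the dyadic case is free since the dyadic parent of a stopping cube is not a stopping cube. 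With overlapping balls none of these is automatic, and each surrogate is obtained only after passing to a fixed dilate of a ball; tracking how the dilation factors accumulate through a covering lemma and the doubling inequality~\eqref{doub1} is exactly what produces the extra factor $C_\mu$, the effective parameter $\max\{\Theta,c_\mu 3^D\}$, and the need to take $\lambda\ge\lambda_0\mvint_{\widehat{B}_0}F\,d\mu$ with $\lambda_0=\lambda_0(\tau,\eta,\mu)$.

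\emph{Stopping balls and the good-$\lambda$ estimate.}
Fix $\lambda\ge\lambda_0\mvint_{\widehat{B}_0}F\,d\mu$ and set $\Omega_\lambda=\{x\in\widehat{B}_0:M_{\Bs}F(x)>\lambda\}$ (open). For $y\in\Omega_\lambda$ I would put $R_y:=\sup\{r_B: B\in\Bs,\ y\in B,\ \mvint_{B}F\,d\mu>\lambda\}$ and choose $B_y\in\Bs$ with $y\in B_y$, $\mvint_{B_y}F>\lambda$ and $r_{B_y}>R_y/2$. Then: first, $\mvint_{B_y}F>\lambda\ge\lambda_0\mvint_{\widehat{B}_0}F$ gives $\mu(B_y)<\mu(\widehat{B}_0)/\lambda_0$, so by~\eqref{doub1} the radius $r_{B_y}$ is a tiny fraction of $r_{B_0}$; hence, for $\lambda_0$ large (depending on $\tau,\eta,\mu$), all dilates of $B_y$ by the absolute constants used below still lie in $\Bs$, so their $F$-averages make sense and are seen by $M_{\Bs}$. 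Second, the near-maximality of $r_{B_y}$ gives $\mvint_{B}F\le\lambda$ for every $B\in\Bs$ with $y\in B$ and $r_B>R_y$; this is the surrogate of fact~(3). A basic covering lemma applied to $\{B_y\}_{y\in\Omega_\lambda}$ yields a countable pairwise disjoint family $\{B_i\}_i\subset\Bs$ with $\Omega_\lambda\subset\bigcup_i c\,B_i$ for an absolute $c$; since $B_i\subset\Omega_\lambda$, one has $\sum_i\mu(B_i)\le\mu(\Omega_\lambda)$ and, by doubling, $\mu(\Omega_\lambda)\le c_\mu c^D\sum_i\mu(B_i)$. Now let $E_\lambda=\{x\in\widehat{B}_0:M_{\Bs}F(x)>K\lambda,\ G_{\Bs}^*(x)\le\gamma\lambda\}$ and fix $x\in E_\lambda$; pick $\tilde B\in\Bs$ with $x\in\tilde B$ and $\mvint_{\tilde B}F>K\lambda$. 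Its centre belongs to $\Omega_{K\lambda}\subset\Omega_\lambda\subset\bigcup_i c\,B_i$, say to $c\,B_j$. If $r_{\tilde B}$ exceeded a fixed multiple of $r_{B_j}$, then $\tilde B$ would contain the point $y_j$ that generated $B_j$ and satisfy $r_{\tilde B}>R_{y_j}$, forcing $\mvint_{\tilde B}F\le\lambda$ --- impossible; hence $r_{\tilde B}\lesssim r_{B_j}$ and, by the triangle inequality, $\tilde B\subset c'B_j$ with $c'$ absolute, $c'B_j$ and $c'\tau B_j$ in $\Bs$, and $\mvint_{c'\tau B_j}F\le\lambda$ (again by near-maximality). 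By~(i) on $c'B_j$, $F\le G^{c'B_j}+H^{c'B_j}$ $\mu$-a.e.\ on $c'B_j\supset\tilde B$, and by~(ii), $\|H^{c'B_j}\|_{L^\infty(c'B_j)}\le\Theta\mvint_{c'\tau B_j}F\le\Theta\lambda$; averaging $F$ over $\tilde B$ gives $\mvint_{\tilde B}G^{c'B_j}>(K-\Theta)\lambda$, i.e.\ $M_{\Bs}\big(G^{c'B_j}\chi_{c'B_j}\big)(x)>(K-\Theta)\lambda$. Thus $E_\lambda\subset\bigcup_{j\in J}\{M_{\Bs}(G^{c'B_j}\chi_{c'B_j})>(K-\Theta)\lambda\}$, where $J$ is the set of indices used; for $j\in J$ the corresponding point of $E_\lambda$ lies in $c'B_j$, so $g^{c'B_j}\le\gamma\lambda$. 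The weak type $(1,1)$ bound for $M_{\Bs}$ (constant depending only on $\mu$) and~(iii) give
\[
\mu\big(\{M_{\Bs}(G^{c'B_j}\chi_{c'B_j})>(K-\Theta)\lambda\}\big)\le\frac{C}{(K-\Theta)\lambda}\,\mu(c'B_j)\Big(\delta\,\mvint_{c'\tau B_j}F+g^{c'B_j}\Big)\le\frac{C(\delta+\gamma)}{K-\Theta}\,\mu(c'B_j);
\]
summing over $j\in J$ and using $\mu(c'B_j)\le c_\mu (c')^D\mu(B_j)$ and $\sum_j\mu(B_j)\le\mu(\Omega_\lambda)$ yields~\eqref{eqn:good-lambda:X}, all constants being absorbed into one $C_\mu=C_\mu(\mu)$; the dilation bookkeeping is what imposes $K>\max\{\Theta,c_\mu 3^D\}$.

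\emph{Self-improvement.}
From here the argument should be verbatim that of the end of Section~\ref{section:proof:good-lambda-Q}. Inequality~\eqref{eqn:good-lambda:X} gives, for all $\lambda>0$,
\[
\mu(\Omega_{K\lambda})\le C_\mu\,\frac{\delta+\gamma}{K-\Theta}\,\mu(\Omega_\lambda)+\mu\big(\{x\in\widehat{B}_0:G_{\Bs}^*(x)>\gamma\lambda\}\big)+\mu(\widehat{B}_0)\,\chi_{\{0<\lambda<\lambda_0 F_{\widehat{B}_0}\}}(\lambda).
\]
Setting $\Theta':=\max\{\Theta,c_\mu 3^D\}$ and choosing $K=2\Theta'$ (so $K-\Theta=2\Theta'-\Theta\ge\Theta$), one takes the supremum of $\lambda^p\mu(\Omega_\lambda)$ over $(0,N)$ (respectively $\int_0^N p\lambda^{p-1}\mu(\Omega_\lambda)\,d\lambda$ for the strong estimate) and, exactly as in the dyadic case, is led to a recursion whose coefficient is dominated by $(2\Theta')^p\,C_\mu(\delta+\gamma)/\Theta$; since $\delta<\frac{1}{2C_\mu}\min\{1,\Theta/(c_\mu 3^D)\}=\frac{\Theta}{2C_\mu\Theta'}$, this can be made $<1$ for $\gamma$ small exactly when $1<p<\frac{\log(\Theta/(C_\mu\delta))}{\log(2\Theta')}=\frac{\log(1/(C_\mu\,\delta\,\Theta^{-1}))}{\log(2\max\{\Theta,c_\mu 3^D\})}$. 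The truncated quantity being finite, the recursion term is absorbed; letting $N\to\infty$ gives $\|M_{\Bs}F\|_{L^{p,\infty},\widehat{B}_0}\lesssim\|G_{\Bs}^*\|_{L^{p,\infty},\widehat{B}_0}+F_{\widehat{B}_0}$ and its $L^p$ analogue. Finally $\|F\|_{L^p,B_0}\le\|M_{\Bs}F\|_{L^p,\widehat{B}_0}$ and $\|F\|_{L^{p,\infty},B_0}\le\|M_{\Bs}F\|_{L^{p,\infty},\widehat{B}_0}$ are immediate from $|F|\le M_{\Bs}F$ $\mu$-a.e.\ on $B_0$.

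\emph{Main obstacle.}
The delicate point will be the localization in the second paragraph. For dyadic cubes, once $x$ lies in a stopping cube $Q_i$ the maximal function restricts to $Q_i$, the decomposition $F\le G^{Q_i}+H^{Q_i}$ is available there, and $F_{\widehat{Q}_i}\le\lambda$ comes for free. For overlapping balls there is no canonical ``stopping ball containing $x$'' with these properties; the argument has to be rescued by (a) locating $x$ through the \emph{centre} of a ball realizing $M_{\Bs}F(x)>K\lambda$, which automatically lies in the already covered set $\Omega_{K\lambda}$; (b) selecting the stopping balls with near-maximal radius, so a controlled dilate has $F$-average $\le\lambda$ (the replacement of ``the parent of a stopping cube is not stopping''); and (c) choosing $\lambda_0$ large so that all dilates used stay admissible (inside $\widehat{B}_0$, radius $\le\eta r_{B_0}$). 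Controlling how these dilation factors accumulate through the covering lemma and~\eqref{doub1} is precisely what replaces $\Theta$ by $\max\{\Theta,c_\mu 3^D\}$ and brings in the overall constant $C_\mu$ that is absent in the dyadic Theorem~\ref{theor:good-lambda:dyadic}.
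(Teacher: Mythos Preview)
Your proposal is correct and mirrors the paper's proof: a Vitali-type covering of $\Omega_\lambda$ by balls of near-maximal radius (this is exactly the paper's Lemma~\ref{cz2}, with your $R_y$ playing the role of the paper's $r_x$), localization of the testing ball into a fixed dilate $c'B_j$, decomposition via (i)--(iii) on that dilate, the weak $(1,1)$ bound for $M_{\Bs}$, and then the truncation/absorption argument verbatim from Section~\ref{section:proof:good-lambda-Q} with $K=2\max\{\Theta,c_\mu 3^D\}$. The one minor difference is in how the testing ball is shown to be small: you argue that if $r_{\tilde B}$ were too large then $\tilde B\ni y_j$ with $r_{\tilde B}>R_{y_j}$, directly forcing $F_{\tilde B}\le\lambda$; the paper instead embeds the testing ball into a concentric dilate $\sigma B_i$ and compares $F$-averages through doubling, which is precisely where the factor $c_\mu 3^D$ (and hence the condition $K>c_\mu 3^D$) actually appears in the paper's argument.
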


The proof of this result is postponed until Section \ref{section:proof:good-lambda-X}. In the following section we will present some applications in the context of the John-Niren\-berg, Franchi-P\'erez-Wheeden and Gurov-Reshetnyak conditions. These are not mere translations of the ones considered in Section \ref{section:App-I} to the setting of spaces of homogeneous type as we consider oscillations in $L^\rho$ with $\rho\le 1$ in the first two applications and we allow some dilation on the right hand sides of the Gurov-Reshetnyak conditions.

\section{Applications}\label{appl}

\subsection{John-Nirenberg spaces}\label{JNpmetric}

The Euclidean definition of the John-Nirenberg spa\-ces can be generalized in a straightforward way by replacing cubes by balls. In
\cite{ABKY} a further generalization was considered by allowing the family of balls to have bounded overlap and the authors established
the corresponding embedding into the weak-$L^p$ space.

For our purposes we shall further generalize those  definitions as follows. Given  a ball $B$, $0<\rho\le 1$, $\tau \geq 1$ and $p>1$,
for each $f \in L^\rho(B)$  we write
\begin{equation}\label{jnpgen}
\|f\|_{JN_{p,\tau}^\rho, B}
:=
\sup\left(\frac1{\mu(B)} \sum_i \left( \inf_{c \in \R}
\mvint_{B_i} |f-c|^\rho \,d\mu \right)^{p/\rho}\,\mu(\tau B_i)\right)^{1/p},
\end{equation}
where the supremum runs over all pairwise disjoint families $\{\tau B_i\}_i$ with $\tau B_i\subset B$ for every $i$.
We say that $f \in JN_{p,\tau}^\rho(B_0)$ provided $f \in L^\rho(B_0)$  and
$$
\|f\|_{JN_{p,\tau}^\rho(B_0)}
:=
\sup_{B\subset B_0} \|f\|_{JN_{p,\tau}^\rho, B}<\infty.
$$
Observe that for $\tau=\rho=1$, the space coincides with the corresponding metric version of the John-Nirenberg space considered in Section \ref{section:App-I}.

We obtain  the embedding of the John-Nirenberg spaces as just defined into the corresponding weak-$L^p$ spaces.

\begin{corollary}\label{corol:JNp:X}
Given $1<p<\infty$, $0<\rho\le 1$ and  $\tau \geq 1$, there exists a constant $C$, depending only on $p$, $\tau$, $\eta$, $\rho$  and $\mu$, such that for every ball  $B\subset X$ and $f\in L^\rho(\tau \widehat{B})$, we have
\begin{equation}\label{appl1}
\|f-f_{B}\|_{L^{p,\infty}, B} \le C\, \|f\|_{JN_{p,\tau}^\rho, \tau \widehat{B}}.
\end{equation}
\end{corollary}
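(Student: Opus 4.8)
The plan is to mimic the proof of Corollary~\ref{corol:JNp} in the dyadic setting, but using the homogeneous-type good-$\lambda$ inequality Theorem~\ref{theor:good-lambda:X} in place of Theorem~\ref{theor:good-lambda:dyadic}, and dealing carefully with the two extra features: the $L^\rho$-oscillation with $\rho\le 1$ and the dilation factor $\tau$. First I would fix a ball $B_0\subset X$ with $f\in L^\rho(\tau\widehat B_0)$ and, for each $B\in\Bs=\Bs_{B_0}$, choose (by a near-minimality argument) a constant $c_B\in\R$ with $\mvint_B|f-c_B|^\rho\,d\mu\le 2\inf_{c}\mvint_B|f-c|^\rho\,d\mu$. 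With $F(x):=|f(x)-c_{B_0}|$ on $\widehat B_0$ (which lies in $L^\rho$, and the required $L^1$ bound will come out of the estimate below), I would split, for $B\in\Bs$,
\[
F(x)\le |f(x)-c_B|+|c_B-c_{B_0}|=:G^B(x)+H^B(x).
\]
The constant piece $H^B$ is bounded in $L^\infty(B)$ by $|c_B-c_{B_0}|$, and one estimates this using that $\rho\le 1$: $|c_B-c_{B_0}|^\rho\le\mvint_B|f-c_{B_0}|^\rho\,d\mu=\mvint_B F^\rho\,d\mu$, so $|c_B-c_{B_0}|\le\big(\mvint_B F^\rho\,d\mu\big)^{1/\rho}$; then by Jensen (since $\rho\le 1$ the map $t\mapsto t^{1/\rho}$ is convex, so one instead uses $\big(\mvint_B F^\rho\big)^{1/\rho}\le C\mvint_{\tau B}F$ via a doubling/Kolmogorov-type argument, or more simply bounds $\mvint_B F^\rho$ by doubling in terms of $\mvint_{\tau B}F^\rho$ and then controls $F^\rho$ on $\tau B$) — the cleanest route is to note $\mvint_B F^\rho\,d\mu\le\frac{\mu(\tau B)}{\mu(B)}\mvint_{\tau B}F^\rho\,d\mu\le c_\mu\tau^D\mvint_{\tau B}F^\rho\,d\mu$ and, since $F\in L^\rho$ with $\rho\le 1$ makes raising to power $1/\rho$ delicate, instead keep the $L^\rho$ average and use it as the ``$F$'' in a version of the theorem; but given the statement of Theorem~\ref{theor:good-lambda:X} is phrased with ordinary averages, I would apply the theorem to $\widetilde F:=F^\rho$-type quantities only if needed. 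More directly: set $\Theta$ so that $\|H^B\|_{L^\infty(B)}\le\Theta\,\mvint_{\tau B}F\,d\mu$ holds; this uses $|c_B-c_{B_0}|\le(\mvint_B|f-c_{B_0}|^\rho)^{1/\rho}$ together with the elementary inequality $(\mvint_B h^\rho)^{1/\rho}\le(\mu(\tau B)/\mu(B))^{1/\rho}\mvint_{\tau B}h$ valid for $h\ge0$, $0<\rho\le1$ — wait, that last step is false in general; the correct tool is that for $0<\rho\le 1$ one has $\mvint_B h^\rho\,d\mu\le(\mvint_B h\,d\mu)^\rho$ only in the wrong direction, so instead I would feed $F$ itself (not $F^\rho$) into the theorem and accept the constant $c_B$ only through its contribution, taking $\Theta$ to absorb $c_\mu$, $\tau$, $\eta$, $\rho$. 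For the $G^B$ piece, by the choice of $c_B$,
\[
\mvint_B G^B\,d\mu=\mvint_B|f-c_B|\,d\mu\le\Big(\mvint_B|f-c_B|^\rho\,d\mu\Big)^{1/\rho}\cdot(\text{correction})
\]
is again a $\rho\le1$ issue; the honest way is to observe that $\big(\inf_c\mvint_{B}|f-c|^\rho\big)^{1/\rho}$ is precisely the quantity appearing in $\|f\|_{JN^\rho_{p,\tau}}$, so I would set $g^B:=\big(2\inf_c\mvint_B|f-c|^\rho\,d\mu\big)^{1/\rho}$ and check $\mvint_B G^B\,d\mu\le g^B$, giving condition (iii) with $\delta=0$; but $\mvint_B|f-c_B|\,d\mu$ need not be $\le g^B$ when $\rho<1$ (Jensen goes the wrong way).

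Therefore the genuinely correct setup is to run the whole argument with $L^\rho$ averages throughout: apply Theorem~\ref{theor:good-lambda:X} not to $F=|f-c_{B_0}|$ but rather argue that the stopping-time / distributional estimate can be done ``at level $\rho$''. Concretely, I would define the localized sharp maximal function $M^{\#,\rho}_{\Bs}f(x):=\sup_{x\in B\in\Bs}\big(\inf_c\mvint_B|f-c|^\rho\,d\mu\big)^{1/\rho}$ and take $g^B:=2^{1/\rho}\big(\inf_c\mvint_B|f-c|^\rho\,d\mu\big)^{1/\rho}$ so that $G^*_{\Bs}=M^{\#,\rho}_{\Bs}f$ up to the constant $2^{1/\rho}$. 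For condition (iii) I need $\mvint_B|f-c_B|\,d\mu\le\delta\mvint_{\tau B}F\,d\mu+g^B$; here I would NOT use Jensen but instead split $|f-c_B|\le|f-c_B|^\rho\cdot|f-c_B|^{1-\rho}$ — no. The clean fix, which is surely what the paper does: iterate the oscillation. Since $\rho\le 1$, write $|f-c_B|=\big(|f-c_B|^\rho\big)^{1/\rho}$ and apply the theorem to the function $F:=|f-c_{B_0}|^\rho$ with exponent $p/\rho$ in place of $p$! Then $F\in L^1(\widehat B_0)$, the split becomes $F\le|f-c_B|^\rho+|c_B-c_{B_0}|^\rho=:G^B+H^B$ using subadditivity of $t\mapsto t^\rho$ for $\rho\le1$, we get $\|H^B\|_\infty=|c_B-c_{B_0}|^\rho\le\mvint_B|f-c_{B_0}|^\rho\,d\mu\le c_\mu\tau^D\mvint_{\tau B}F\,d\mu$ (condition (ii) with $\Theta=c_\mu\tau^D$), and $\mvint_B G^B\,d\mu=\mvint_B|f-c_B|^\rho\,d\mu\le 2\inf_c\mvint_B|f-c|^\rho\,d\mu=:g^B$ (condition (iii) with $\delta=0$). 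Then $G^*_{\Bs}=(M^{\#,\rho}_{\Bs}f)^\rho$ (up to the factor $2$), Theorem~\ref{theor:good-lambda:X} with $\delta=0$ and exponent $p/\rho>1$ yields $\|F\|_{L^{p/\rho,\infty},B_0}\lesssim\|G^*_{\Bs}\|_{L^{p/\rho,\infty},\widehat B_0}+F_{\widehat B_0}$, i.e. $\||f-c_{B_0}|^\rho\|_{L^{p/\rho,\infty}}\lesssim\|(M^{\#,\rho}_\Bs f)^\rho\|_{L^{p/\rho,\infty}}+(\mvint_{\widehat B_0}|f-c_{B_0}|^\rho)$, which after taking $\rho$-th roots gives $\|f-c_{B_0}\|_{L^{p,\infty},B_0}\lesssim\|M^{\#,\rho}_\Bs f\|_{L^{p,\infty},\widehat B_0}$, and replacing $c_{B_0}$ by $f_{B_0}$ costs only a comparable amount (since $|f_{B_0}-c_{B_0}|\le(\mvint_{B_0}|f-c_{B_0}|^\rho)^{1/\rho}\lesssim M^{\#,\rho}_\Bs f$ pointwise-ish, or by a direct estimate on $\widehat B_0$).

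Finally I would close the argument exactly as in Corollary~\ref{corol:JNp}: by a stopping-time decomposition one shows $\|M^{\#,\rho}_\Bs f\|_{L^{p,\infty},\widehat B_0}\lesssim\|f\|_{JN^\rho_{p,\tau},\tau\widehat B}$. Fix $\lambda\ge\lambda_0(\inf_c\mvint_{\widehat B_0}|f-c|^\rho)^{1/\rho}$ and, among the balls $B\in\Bs$ containing a given point of the level set $\{M^{\#,\rho}_\Bs f>\lambda\}$, extract by a Vitali-type covering lemma (this is where homogeneity/overlap enters) a subfamily $\{B_i\}$ with $\{\tau B_i\}$ pairwise disjoint and $\tau B_i\subset\tau\widehat B$, such that $\mu(\{M^{\#,\rho}_\Bs f>\lambda\})\lesssim\sum_i\mu(\tau B_i)$ while $\inf_c\mvint_{B_i}|f-c|^\rho\,d\mu>\lambda^\rho$; then
\[
\lambda^p\,\mu\big(\{M^{\#,\rho}_\Bs f>\lambda\}\big)\lesssim\sum_i\lambda^p\mu(\tau B_i)\le\sum_i\Big(\inf_{c}\mvint_{B_i}|f-c|^\rho\Big)^{p/\rho}\mu(\tau B_i)\le\mu(\tau\widehat B)\,\|f\|_{JN^\rho_{p,\tau},\tau\widehat B}^p,
\]
and the small-$\lambda$ range is handled trivially as in the dyadic proof. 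The main obstacle I anticipate is precisely this covering step: in the metric setting the selected balls $B_i$ and their $\tau$-dilates overlap, so one must invoke a basic covering lemma (Vitali $5r$-type, using doubling) to pass from an arbitrary cover of the level set by balls in $\Bs$ to a disjointified family of dilates with controlled total measure, all while staying inside $\tau\widehat B_0$ — this is where the conditions $r_B\le\eta r_{B_0}$ in \eqref{def-B-cal} and the inclusion \eqref{dila-Fam-B} are essential, and it forces the $\tau\widehat B$ (rather than $\widehat B$ or $B$) on the right-hand side of \eqref{appl1}. A secondary technical point is justifying $F=|f-c_{B_0}|^\rho\in L^1(\widehat B_0)$ and making the passage between $c_{B_0}$ and $f_{B_0}$ clean; both follow from the $JN^\rho_{p,\tau}$ hypothesis together with the observation that the single term $B_0$ (or $\widehat B$) already appears in the supremum defining the norm.
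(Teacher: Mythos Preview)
Your final approach---applying Theorem~\ref{theor:good-lambda:X} to $F=|f-c|^\rho$ with exponent $p/\rho$, using subadditivity of $t\mapsto t^\rho$ for the split, and closing with a Vitali $5r$-covering of the level set by the $\tau$-dilates---is exactly what the paper does. The paper takes the \emph{exact} minimizer $c_B$ (briefly arguing existence), centers the argument at $c_{\widehat B_0}$ rather than $c_{B_0}$, and records $\Theta=2\,c_\mu\tau^D$; these are cosmetic differences.

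Two small corrections. First, your inequality $|f_{B_0}-c_{B_0}|\le\big(\mvint_{B_0}|f-c_{B_0}|^\rho\,d\mu\big)^{1/\rho}$ is false for $\rho<1$ (Jensen goes the other way, as you yourself noticed earlier in the proposal). The paper sidesteps this: once $\|f-c_{\widehat B_0}\|_{L^{p,\infty},B_0}$ is bounded, one has $f\in L^1(B_0)$, and then $|f_{B_0}-c_{\widehat B_0}|\le\mvint_{B_0}|f-c_{\widehat B_0}|\,d\mu\lesssim\|f-c_{\widehat B_0}\|_{L^{p,\infty},B_0}$, so the passage from $c_{\widehat B_0}$ to $f_{B_0}$ costs a constant multiple of the weak-$L^p$ norm, not of an $L^\rho$ average. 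Second, no small-$\lambda$/large-$\lambda$ split is needed in the covering step: for every $\lambda>0$, each point of $\{G_\Bs^*>\lambda^\rho\}$ already lies in some $B_x\in\Bs$ with the desired oscillation bound, so one simply applies Vitali to $\{\tau B_x\}_x$, uses $\tau B_i\subset\tau\widehat B_0$ from \eqref{dila-Fam-B}, and reads off the $JN^\rho_{p,\tau}$ norm directly.
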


\begin{proof}
Fix a ball $B_0$ and assume that $\|f\|_{JN_{p,\tau}^\rho, \tau \widehat{B}_0}<\infty$ with $f\in L^\rho(\tau \widehat{B}_0)$. For every $B\subset\tau \widehat{B}_0$, let $c_B$ be the real number
for which
$$
\inf_{c \in \R} \mvint_B   |f-c|^\rho \,d\mu = \mvint_B  |f-c_B|^\rho \,d\mu.
$$
That $c_B$ exists (i.e, that the infimum is attained) follows from the fact that any sequence approximating the infimum (which is finite since $f\in L^\rho(B)$) is bounded. Then one can extract a convergent subsequence for which dominated convergence theorem can be applied.
Further details are left to the interested reader.

We shall apply Theorem \ref{theor:good-lambda:X} with $F=|f-c_{\widehat{B}_0}|^\rho\in L^1(\widehat{B}_0)$.
First, we notice that for every $B\in \Bs$
$$
F(x)
\leq |f(x)-c_{B}|^\rho+|c_{B}-c_{\widehat{B}_0}|^\rho=:G^B(x)+H^B(x).
$$
By the minimizing property of the constants $c_B$, we have
\begin{multline*}
H^B(x)
\equiv
\mvint_B |c_B-c_{\widehat{B}_0}|^\rho \,d\mu
\leq
\mvint_B |f-c_B|^\rho \,d\mu
+
\mvint_B |f-c_{\widehat{B}_0}|^\rho \,d\mu
\\
\leq
2\mvint_B |f-c_{\widehat{B}_0}|^\rho \,d\mu
=
2\mvint_B F\,d\mu
\le
2\,c_\mu\,\tau^D\,\mvint_B F\,d\mu
.
\end{multline*}
This is assumption (ii) in  Theorem \ref{theor:good-lambda:X} with $\Theta=2\,c_\mu\,\tau^D$. Besides,
$$
\mvint_B G^B\, d\mu
=
\mvint_B  |f-c_B|^\rho \,d\mu
=:
g^B,
$$
which is assumption (iii) with $\delta=0$. Note that
$G_{\Bs}^*$ is the sharp type maximal function with respect to the
basis $\Bs$ defined by
$$
G_{\Bs}^*(x)
=
\sup_{x \in B \in \Bs}
g^B
=
\sup_{x \in B \in \Bs}
\mvint_B |f-c_B|^\rho \,d\mu
=
\sup_{x \in B \in \Bs}\inf_{c \in \R} \mvint_B   |f-c|^\rho \,d\mu
.
$$
An application of \eqref{main1} with the exponent $p/\rho$ (notice that $\delta=0$ and $p/\rho\ge p>1$) gives
\begin{align*}\label{appl2}
\|f-f_{B_0}\|_{L^{p,\infty},B_0}^\rho&
\leq
2^\rho\,\|f-c_{\widehat{B}_0} \|_{L^{p,\infty},B_0}^\rho
=
2^\rho\,\|F\|_{L^{p/\rho,\infty},B_0}
\\
& \lesssim
\|G_{\Bs}^*\|_{L^{p/\rho,\infty},\widehat{B}_0}
+
\mvint_{\widehat{B}_0} |f-c_{\widehat{B}_0}|^\rho \,d\mu
\\
&
=\|(G_{\Bs}^*)^{1/\rho}\|_{L^{p,\infty}(\widehat{B}_0)}^{\rho}
+
\inf_{c\in\re} \mvint_{\widehat{B}_0} |f-c|^\rho \,d\mu
\\
& \le
\|(G_{\Bs}^*)^{1/\rho}\|_{L^{p,\infty}(\widehat{B}_0)}^{\rho}
+
\|f\|_{JN_{p,\tau}^\rho, \tau \widehat{B}_0}^\rho.
\end{align*}
To estimate $\|(G_{\Bs}^*)^{1/\rho}\|_{L^{p,\infty}(\widehat{B}_0)}$, take any
$x \in \widehat{B}_0$ with $G_{\Bs}^*(x)^{1/\rho}>\lambda$. Then there is a ball $B_x \in \Bs$, $B_x\ni x$ with
$$
\left(\mvint_{B_x} |f-c_{B_x}|^\rho \,d\mu\right)^{1/\rho} >\lambda.
$$
Now apply Vitali's covering theorem to the balls $\{\tau B_x\}_x$ to obtain
a countable family of pairwise disjoint balls $\tau B_i$ with
$$
\{x \in \widehat{B}_0: G_{\Bs}^*(x)^{1/\rho}>\lambda\} \subset \bigcup_i 5\tau B_i.
$$
Observe that since $B_i \in \Bs$, we have $\tau B_i \subset \tau \widehat{B}_0$ (see \eqref{dila-Fam-B}).
Therefore,
\begin{multline*}
\mu \{x \in \widehat{B}_0: G_{\Bs}^*(x)^{1/\rho}>\lambda \} 
\leq c_\mu 5^D \sum_i
\mu(\tau B_i)\\ \leq \frac{c_\mu 5^D}{\lambda^p} \sum_i \mu(\tau B_i)
\left( \mvint_{B_i} |f-c_{B_i}|^\rho \,d\mu \right)^{p/\rho} \leq \frac{c_\mu 5^D}{\lambda^p}
\|f\|_{JN_{p,\tau}^\rho,\tau \widehat{B}_0}^p \mu(\tau \widehat{B}_0).
\end{multline*}
Consequently,
\begin{equation*}\label{appl3}
\|(G_{\Bs}^*)^{1/\rho}\|_{L^{p,\infty}, \widehat{B}_0}
\lesssim
\|f\|_{JN_{p,\tau}^\rho,\tau \widehat{B}_0},
\end{equation*}
and the desired estimate \eqref{appl1} follows.
\end{proof}

\subsection{Franchi-P\'erez-Wheeden self-improvement}\label{FPWmetric}

Fix a ball $B_0$ and a functional $a:\{B:\ B\subset \tau\,\widehat{B}_0\}\longrightarrow [0,\infty)$. Given $1<p<\infty$ and $B\subset \tau\,\widehat{B}_0$ we set
$$
\|a\|_{D_p,B}
=
\frac1{a(B)}\sup
\left(\frac1{\mu(B)}\sum_{i} a(B_i)^p\,\mu(B_i)\right)^{1/p},
$$
where the supremum runs over all pairwise disjoint families $\{B_i\}_i$ with $B_i\subset B$.
We say that $a\in D_p(\tau\,\widehat{B}_0)$ provided
$$
\|a\|_{D_p(\tau\,\widehat{B}_0)}:=\sup_{B\subset \tau\,\widehat{B}_0} \|a\|_{D_p,B}<\infty.
$$
Our goal is to prove the following result which, in particular, includes the main result of MacManus and P\'erez \cite[Theorem 1.2]{MaPe}.
\begin{corollary}\label{corol:MP-weak}
Given $0<\rho\le 1$ and  $\tau \geq 1$, assume that for every $B\subset \tau\,\widehat{B}_0$
\begin{equation}\label{genPoinc}
\left(\inf_{c \in \R} \mvint_B |f-c|^\rho \,d\mu \right)^{1/\rho} \leq a(\tau B).
\end{equation}
If $a\in D_p(\tau\,\widehat{B}_0)$, $1<p<\infty$, then
\begin{equation}\label{genPoincImpr}
\|f-f_B\|_{L^{p,\infty}(B)} \lesssim \|a\|_{D_p(\tau\widehat{B}_0)} a(\tau \widehat{B})
\end{equation}
whenever $\tau \widehat{B}\subset \tau \widehat{B}_0$.
\end{corollary}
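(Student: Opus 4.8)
The plan is to obtain Corollary~\ref{corol:MP-weak} as an immediate consequence of the John--Nirenberg embedding of Corollary~\ref{corol:JNp:X}, in exact parallel with the way Corollary~\ref{corol:Dp-dyadic} was deduced from Corollary~\ref{corol:JNp} in the dyadic setting (the present statement being the version with oscillations measured in $L^\rho$, $\rho\le 1$, and with the dilation parameter $\tau$, which is what makes it contain \cite[Theorem 1.2]{MaPe}). Fix a ball $B$ with $\tau\widehat{B}\subset\tau\widehat{B}_0$. The first step is to show that the generalized Poincar\'e hypothesis \eqref{genPoinc}, combined with $a\in D_p(\tau\,\widehat{B}_0)$, forces the John--Nirenberg bound
\[
\|f\|_{JN_{p,\tau}^\rho,\tau\widehat{B}}\le\|a\|_{D_p(\tau\,\widehat{B}_0)}\,a(\tau\widehat{B}).
\]

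To establish this bound I would take an arbitrary pairwise disjoint family $\{\tau B_i\}_i$ with $\tau B_i\subset\tau\widehat{B}$, that is, a competing family in the supremum defining $\|f\|_{JN_{p,\tau}^\rho,\tau\widehat{B}}$. Since $\tau\ge1$ and $\tau\widehat{B}\subset\tau\widehat{B}_0$, each $\tau B_i$ lies in $\tau\widehat{B}_0$, so it belongs to the domain of $a$, and likewise $B_i\subset\tau\widehat{B}_0$, so \eqref{genPoinc} applies to the ball $B_i$ and gives $\inf_{c\in\R}\mvint_{B_i}|f-c|^\rho\,d\mu\le a(\tau B_i)^\rho$. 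Raising to the power $p/\rho$, summing in $i$ and dividing by $\mu(\tau\widehat{B})$, the quantity inside the supremum defining $\|f\|_{JN_{p,\tau}^\rho,\tau\widehat{B}}^p$ is dominated by $\mu(\tau\widehat{B})^{-1}\sum_i a(\tau B_i)^p\,\mu(\tau B_i)$. But $\{\tau B_i\}_i$ is itself a pairwise disjoint family of balls contained in $\tau\widehat{B}$, hence an admissible family in the definition of $\|a\|_{D_p,\tau\widehat{B}}$, so this last expression is at most $\|a\|_{D_p,\tau\widehat{B}}^p\,a(\tau\widehat{B})^p\le\|a\|_{D_p(\tau\,\widehat{B}_0)}^p\,a(\tau\widehat{B})^p$, the final inequality using $\tau\widehat{B}\subset\tau\widehat{B}_0$. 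Since this bound does not depend on the chosen family, taking the supremum over all of them yields the displayed estimate.

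The second step is simply to feed this into Corollary~\ref{corol:JNp:X} applied to the ball $B$ (the required integrability $f\in L^\rho(\tau\widehat{B})$ is part of the standing hypotheses, being implicit in the very meaning of \eqref{genPoinc}): it gives $\|f-f_B\|_{L^{p,\infty}(B)}\lesssim\|f\|_{JN_{p,\tau}^\rho,\tau\widehat{B}}$, and combining with the first step yields \eqref{genPoincImpr}.

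I do not anticipate any genuine obstacle once Corollary~\ref{corol:JNp:X} is in hand; the only points deserving care are the bookkeeping ones in the first step: checking that the dilated balls $\tau B_i$ remain inside $\tau\widehat{B}_0$ so that $a(\tau B_i)$ makes sense and \eqref{genPoinc} may legitimately be invoked, and observing that the normalizing factor $\mu(\tau\widehat{B})$ appearing in the John--Nirenberg functional over $\tau\widehat{B}$ is precisely the one appearing in the $D_p$ functional at the scale $\tau\widehat{B}$, so that the two suprema line up with no loss.
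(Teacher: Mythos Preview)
Your proposal is correct and matches the paper's own proof essentially line for line: both apply Corollary~\ref{corol:JNp:X} to the ball $B$ and then bound $\|f\|_{JN_{p,\tau}^\rho,\tau\widehat{B}}$ by taking an arbitrary admissible family $\{\tau B_i\}$, invoking \eqref{genPoinc} on each $B_i$, and recognizing the resulting sum as one controlled by the $D_p$ condition at the scale $\tau\widehat{B}$. The only difference is cosmetic ordering (you prove the $JN_{p,\tau}^\rho$ bound first and then apply the embedding, the paper does it in reverse), and your bookkeeping remarks about $\tau B_i\subset\tau\widehat{B}_0$ are accurate.
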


\begin{proof}
Fix $\tau \widehat{B}\subset \tau \widehat{B}_0$ and note that by Corollary \ref{corol:JNp:X}
$$
\|f-f_{B}\|_{L^{p,\infty}, B}
\lesssim
\|f\|_{JN_{p,\tau}^\rho, \tau \widehat{B}}.
$$
To compute the right hand term let $\{\tau\,B_i\}$ be a pairwise disjoint family so that $\tau\,B_i\subset \tau \widehat{B}$. Then, by \eqref{genPoinc}, we clearly have
\begin{multline*}
\frac1{\mu(\tau \widehat{B})} \sum_i \left( \inf_{c \in \R}
\mvint_{B_i} |f-c|^\rho \,d\mu \right)^{p/\rho}\,\mu(\tau B_i)
\le
\frac1{\mu(\tau \widehat{B})} \sum_i a(\tau\,B_i)^p\,\mu(\tau B_i)
\\
\le
\|a\|_{D_p,\tau \widehat{B}}^p\,  a(\tau \widehat{B})^p
\le
\|a\|_{D_p(\tau \widehat{B}_0)}^p\, a(\tau \widehat{B})^p.
\end{multline*}
Taking the supremum over all such families we easily obtain the desired estimate.
\end{proof}

\begin{remark}
Let us notice that the proof of the previous is an easy consequence of the embedding of the John-Nirenberg spaces into the corresponding weak space along with the definition of the $D_p$ condition. Indeed the same argument yields the following: given a ball $B_0$ and $f\in L^\rho(B_0)$ if \eqref{genPoinc} holds for every ball $B$  such that $\tau\,B\subset B_0$ then $\|a\|_{D_p,B_0}<\infty$ implies that $f\in JN_{p,\tau}^\rho(B_0)$ and moreover
$$
\|f\|_{JN_{p,\tau}^\rho, B_0}
\le
\|a\|_{D_p, B_0}\,a(B_0).
$$
\end{remark}

We can ``optimize'' the previous estimate by ``optimizing'' \eqref{genPoinc}.

\begin{corollary}
Fix a ball $B_0\subset X$ and  $f\in L^{\rho}(B_0)$. For every $B\subset B_0$ set
$$
a_0(B)
:=
\left(\inf_{c \in \R} \mvint_{\tau^{-1}B} |f-c|^\rho \,d\mu \right)^{1/\rho}.
$$
Then, $\|f\|_{JN_{p,\tau}^\rho, B_0}<\infty$ if and only if $\|a_0\|_{D_p,B_0}<\infty$ and in such a case
$$
\|f\|_{JN_{p,\tau}^\rho, B_0}
=
\|a_0\|_{D_p,B_0}\,a_0(B_0).
$$
\end{corollary}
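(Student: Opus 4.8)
The plan is to observe that, once both definitions are unwound, the quantities $\|f\|_{JN_{p,\tau}^\rho,B_0}$ and $\|a_0\|_{D_p,B_0}\,a_0(B_0)$ are \emph{literally the same} supremum, taken over the same collection of pairwise disjoint families of balls, up to relabelling each ball $B$ by its contraction $\tau^{-1}B$. By the definition of the $D_p$ norm,
\[
\|a_0\|_{D_p,B_0}\,a_0(B_0)=\sup\left(\frac1{\mu(B_0)}\sum_i a_0(B_i)^p\,\mu(B_i)\right)^{1/p},
\]
the supremum running over all pairwise disjoint families $\{B_i\}_i$ with $B_i\subset B_0$; for each such $B_i$ one has $\tau^{-1}B_i\subset B_i\subset B_0$, so $f\in L^\rho(\tau^{-1}B_i)$ and $a_0(B_i)$ is well defined (here one uses $f\in L^\rho(B_0)$).

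The first step is the change of variables $B_i'=\tau^{-1}B_i$, i.e.\ $B_i=\tau B_i'$. Since the dilation $B\mapsto\lambda B$ only rescales the radius and is invertible, with inverse $B\mapsto\lambda^{-1}B$, the map $\{B_i\}_i\mapsto\{B_i'\}_i$ is a bijection between pairwise disjoint families $\{B_i\}_i$ with $B_i\subset B_0$ and pairwise disjoint families $\{\tau B_i'\}_i$ with $\tau B_i'\subset B_0$. Termwise,
\[
a_0(B_i)^p=a_0(\tau B_i')^p=\left(\inf_{c\in\R}\aver_{B_i'}|f-c|^\rho\,d\mu\right)^{p/\rho},\qquad \mu(B_i)=\mu(\tau B_i'),
\]
so each summand transforms as required and
\[
\frac1{\mu(B_0)}\sum_i a_0(B_i)^p\,\mu(B_i)=\frac1{\mu(B_0)}\sum_i\left(\inf_{c\in\R}\aver_{B_i'}|f-c|^\rho\,d\mu\right)^{p/\rho}\mu(\tau B_i').
\]
Taking the supremum over all admissible families, the right-hand side is by definition $\|f\|_{JN_{p,\tau}^\rho,B_0}^p$ (that supremum being precisely over pairwise disjoint families $\{\tau B_i'\}_i$ with $\tau B_i'\subset B_0$), which gives the asserted identity $\|f\|_{JN_{p,\tau}^\rho,B_0}=\|a_0\|_{D_p,B_0}\,a_0(B_0)$.

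The equivalence of finiteness is then immediate from this identity, provided $0<a_0(B_0)<\infty$: finiteness of $a_0(B_0)$ follows from $f\in L^\rho(B_0)$ and $\tau^{-1}B_0\subset B_0$ (take $c=0$), while $a_0(B_0)=0$ only in the degenerate case where $f$ coincides $\mu$-a.e.\ on $\tau^{-1}B_0$ with a constant, read off with the obvious conventions. I do not anticipate any genuine obstacle: the argument is pure bookkeeping. The only points deserving (minor) care are that the infimum defining $a_0(B_i)$ is over the \emph{contracted} ball $\tau^{-1}B_i$ rather than $B_i$ itself, that all these contracted balls remain inside $B_0$ so that $a_0$ is defined on the whole family, and that the dilation map is a genuine bijection on balls, so that passing between the $D_p$ families $\{B_i\}_i$ and the $JN_{p,\tau}^\rho$ families $\{\tau B_i'\}_i$ neither drops nor duplicates any admissible configuration.
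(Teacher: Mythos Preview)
Your proof is correct and takes essentially the same approach as the paper. The paper proves the two inequalities $\|a_0\|_{D_p,B_0}\,a_0(B_0)\le \|f\|_{JN_{p,\tau}^\rho, B_0}$ and $\|f\|_{JN_{p,\tau}^\rho, B_0}\le \|a_0\|_{D_p,B_0}\,a_0(B_0)$ separately via the substitutions $\widetilde{B}_i=\tau^{-1}B_i$ and $B_i\mapsto \tau B_i$, whereas you observe directly that this substitution is a bijection between the two admissible collections of families and hence the suprema coincide; this is the same computation, packaged slightly more efficiently.
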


\begin{proof}
Suppose first that $\|f\|_{JN_{p,\tau}^\rho, B_0}<\infty$. Let $\{B_i\}_i$ be a pairwise disjoint family with $B_i\subset B_0$. Then if we write $\widetilde{B}_i=\tau^{-1}\,B_i$ we have that $\{\tau \widetilde{B}_i\}_i$ is a pairwise disjoint family with $\tau \widetilde{B}_i \subset B_0$. Hence, \begin{align*}
\sum_i a_0(B_i)^p\,\mu(B_i)
=
\sum_i \left(\inf_{c \in \R} \mvint_{\widetilde{B}_i} |f-c|^\rho \,d\mu \right)^{p/\rho}\,\mu(\tau \widetilde{B}_i)
\le
\|f\|_{JN_{p,\tau}^\rho, B_0}^p\,\mu(B_0)
\end{align*}
and this readily implies that $\|a_0\|_{D_p,B_0}\,a_0(B_0)\le \|f\|_{JN_{p,\tau}^\rho, B_0}$.

Let us now consider the converse. Assume that $\|a\|_{D_p,B_0}<\infty$. To show that $\|f\|_{JN_{p,\tau}^\rho, B_0}<\infty$ we take $\{\tau B_i\}_i$ pairwise disjoint family with $\tau B_i \subset B_0$. Then,
\begin{align*}
\sum_i \left(\inf_{c \in \R} \mvint_{B_i} |f-c|^\rho \,d\mu \right)^{p/\rho}\mu(\tau B_i)
=
\sum_i a_0(\tau\,B_i)^p\mu(\tau\,B_i)
\le
\|a_0\|_{D_p,B_0}^pa_0(B_0)^p\mu(B_0).
\end{align*}
Taking the supremum over all the possible families we conclude that $\|f\|_{JN_{p,\tau}^\rho, B_0}
\le \|a_0\|_{D_p,B_0}\,a_0(B_0)$.
\end{proof}

\subsection{Weak Gurov-Reshetnyak condition}\label{GRmetric}
The Gurov-Reshetnyak class $GR_\varepsilon(\mu)$, $0<\varepsilon<2$, is defined as the collection of weights
$w \in L^1_{\loc}(X)$ satisfying
$$
\mvint_B |w-w_B|\,d\mu
\le
\varepsilon w_B
$$
for every ball $B\subset X$.
It is known that $w \in GR_\varepsilon(\mu)$ implies
$w \in L^p_{\loc}$ for $1\leq p <p(\varepsilon)$ with $p(\varepsilon)
 \to +\infty$ as $\varepsilon \to 0^+$ \cite[Theorem 3.1]{AB}.
Our approach applies to $GR_\varepsilon(\mu)$, and more generally, its
weak variant
\begin{equation}\label{weakgr}
\mvint_B |w-w_B| \,d\mu \leq \varepsilon w_{\tau B},
\end{equation}
where $\tau \geq 1$ is a fixed parameter. 

As a new result in this metric
setting  we obtain the $L^p$ self-improvement of \eqref{weakgr} for small $\varepsilon$, that is, 
we show that a weak Gurov-Reshetnyak condition implies local higher integrability. 
This, which follows from Theorem \ref{theor:good-lambda:X},  extends the results in \cite{AB} as well as those of T. Iwaniecz \cite{Iw1}, who
studied weak Gurov-Reshetnyak conditions in the Euclidean setting which arise in the study of PDEs.

\begin{theorem}
Fix a ball $B_0$ and let $\tau\ge 1$. Assume that  $0\le w\in L^1(\tau\,\widehat{B}_0)$ satisfies the local weak Gurov-Reshetnyak condition
\begin{equation}\label{eqn:weak-GR}
\mvint_B |w-w_B| \,d\mu \leq \varepsilon w_{\tau B},
\end{equation}
for every $B$ with $\tau\,B\subset \tau \widehat{B}_0$.
If $\varepsilon>0$ is a small enough depending on $\mu$ and $\tau$  then there exists $p(\varepsilon) >1$ \textup{(}see \eqref{ep-p(ep)} below\textup{)} such
that whenever $1 \leq p < p(\varepsilon)$,
we have
\begin{equation}\label{wrh}
\left( \mvint_B |w-w_B|^p \,d\mu \right)^{1/p} \lesssim \varepsilon
w_{\tau \widehat{B}}
\end{equation}
for every $B$ with $\tau\,B\subset \tau \widehat{B}_0$,
and hence $w$ satisfies the following weak reverse H\"older inequality
\begin{equation}\label{wrh-1}
\left( \mvint_B w^p \,d\mu \right)^{1/p}
\lesssim
\mvint_{\tau \widehat{B}} w \,d\mu
\end{equation}
for every $B$ with $\tau\,B\subset \tau \widehat{B}_0$.
Moreover, $p(\varepsilon) \to +\infty$, as $\varepsilon \to 0^+$.
\end{theorem}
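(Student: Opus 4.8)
The plan is to follow the scheme of Corollary~\ref{corol:GR}, with Theorem~\ref{theor:good-lambda:X} playing the role of Theorem~\ref{theor:good-lambda:dyadic}. First I would reduce to the case $B=B_0$: once \eqref{wrh} is proved with $B_0$ in place of $B$, inequality \eqref{wrh-1} for $B_0$ follows from the triangle inequality and the doubling property \eqref{doub1}, and the general case is obtained by rerunning the argument with an arbitrary admissible ball $B$ as the base ball, the weak Gurov--Reshetnyak condition \eqref{eqn:weak-GR} being inherited by the balls relevant to $\Bs_B$ whenever $\tau\widehat{B}\subset\tau\widehat{B}_0$.

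Next I would apply Theorem~\ref{theor:good-lambda:X} to $F:=|w-w_{\widehat{B}_0}|\in L^1(\widehat{B}_0)$. For $B\in\Bs=\Bs_{B_0}$ write
\[
F(x)\le|w(x)-w_B|+|w_B-w_{\widehat{B}_0}|=:G^B(x)+H^B(x),
\]
so (i) is immediate. As $H^B$ is constant, \eqref{doub1} gives $\|H^B\|_{L^\infty(B)}=|w_B-w_{\widehat{B}_0}|\le\mvint_B F\,d\mu\le c_\mu\tau^D\mvint_{\tau B}F\,d\mu$, which is (ii) with $\Theta:=c_\mu\tau^D\ge1$. By \eqref{dila-Fam-B} one has $\tau B\subset\tau\widehat{B}_0$, so \eqref{eqn:weak-GR} applies to $B$, and using $w_{\tau B}\le\mvint_{\tau B}|w-w_{\widehat{B}_0}|\,d\mu+w_{\widehat{B}_0}$ we get
\[
\mvint_B G^B\,d\mu=\mvint_B|w-w_B|\,d\mu\le\varepsilon\,w_{\tau B}\le\varepsilon\mvint_{\tau B}F\,d\mu+\varepsilon\,w_{\widehat{B}_0},
\]
which is (iii) with $\delta:=\varepsilon$ and $g^B\equiv\varepsilon\,w_{\widehat{B}_0}$; hence $G_{\Bs}^*\equiv\varepsilon\,w_{\widehat{B}_0}$ is a constant function.

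Then I would invoke the $L^p$ part of Theorem~\ref{theor:good-lambda:X}. Choosing $\varepsilon$ small enough that $\delta=\varepsilon<\tfrac1{2C_\mu}\min\{1,\Theta(c_\mu3^D)^{-1}\}$ and setting
\begin{equation}\label{ep-p(ep)}
p(\varepsilon):=\frac{\log\big(1/(C_\mu\,\varepsilon\,\Theta^{-1})\big)}{\log\big(2\max\{\Theta,c_\mu3^D\}\big)}=\frac{\log\big(c_\mu\tau^D/(C_\mu\,\varepsilon)\big)}{\log\big(2\,c_\mu\max\{\tau,3\}^D\big)},
\end{equation}
we have $p(\varepsilon)>1$ for $\varepsilon$ small and $p(\varepsilon)\to\infty$ as $\varepsilon\to0^+$. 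For $1\le p<p(\varepsilon)$, since $G_{\Bs}^*$ is constant, \eqref{main2} yields
\begin{multline*}
\|w-w_{\widehat{B}_0}\|_{L^p,B_0}\le\|M_\Bs F\|_{L^p,\widehat{B}_0}\lesssim\|G_{\Bs}^*\|_{L^p,\widehat{B}_0}+F_{\widehat{B}_0}\\
=\varepsilon\,w_{\widehat{B}_0}+\mvint_{\widehat{B}_0}|w-w_{\widehat{B}_0}|\,d\mu\lesssim\varepsilon\,w_{\tau\widehat{B}_0},
\end{multline*}
where the last step applies \eqref{eqn:weak-GR} to $\widehat{B}_0$ itself and \eqref{doub1} to $\widehat{B}_0\subset\tau\widehat{B}_0$. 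Replacing $w_{\widehat{B}_0}$ by $w_{B_0}$ via $|w_{B_0}-w_{\widehat{B}_0}|\le\mvint_{B_0}|w-w_{\widehat{B}_0}|\,d\mu\lesssim\varepsilon\,w_{\tau\widehat{B}_0}$ gives \eqref{wrh} for $B_0$, and \eqref{wrh-1} follows since $(\mvint_{B_0}w^p\,d\mu)^{1/p}\le(\mvint_{B_0}|w-w_{B_0}|^p\,d\mu)^{1/p}+w_{B_0}\lesssim w_{\tau\widehat{B}_0}$ by \eqref{wrh} and \eqref{doub1}.

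The analytic content is entirely in Theorem~\ref{theor:good-lambda:X}; what remains here is bookkeeping, and the step I expect to be the most delicate is arranging the decomposition so that the floating mean $w_{\widehat{B}_0}$ is absorbed into $g^B$ (making $G_{\Bs}^*$ a genuine constant), keeping every ball inside $\tau\widehat{B}_0$ via \eqref{dila-Fam-B} so that \eqref{eqn:weak-GR} is legitimately applied, and propagating the constants so that the choices $\Theta=c_\mu\tau^D$, $\delta=\varepsilon$ produce the threshold \eqref{ep-p(ep)} with $p(\varepsilon)\to\infty$, exactly as in the dyadic Corollary~\ref{corol:GR}.
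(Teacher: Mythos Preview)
Your proof is correct and follows essentially the same route as the paper: apply Theorem~\ref{theor:good-lambda:X} to $F=|w-c|$ with the decomposition $G^B=|w-w_B|$, $H^B=|w_B-c|$, verify (ii) with $\Theta=c_\mu\tau^D$ and (iii) with $\delta=\varepsilon$, $g^B$ constant, and read off \eqref{wrh} from \eqref{main2}. The only cosmetic difference is that the paper takes $c=w_{B_0}$ rather than your $c=w_{\widehat{B}_0}$, which lands directly on \eqref{wrh} without your final triangle-inequality adjustment from $w_{\widehat{B}_0}$ to $w_{B_0}$; both choices work and yield the same $p(\varepsilon)$.
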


\begin{proof}
Fix a ball $B_0'$ satisfying $\tau \widehat{B}_0'\subset \tau \widehat{B}_0$.  We wish to apply Theorem \ref{theor:good-lambda:X} on $B_0'$
to the function $F:=|w-w_{B_0'}|\in L^1(\widehat{B}_0')$. Let $B\in\Bs'=\Bs_{B_0'}$ and note that
$$
F \leq |w-w_{B}|+|w_{B}-w_{B_0'}|=:G^B+H^B.
$$
Condition (ii) on Theorem \ref{theor:good-lambda:X} clearly holds with $\Theta=c_\mu\,\tau^D$. For
(iii) we first observe that $B\in\Bs'$ implies that $\tau\,B\subset\tau \widehat{B}_0'\subset \tau \widehat{B}_0$ (see \eqref{dila-Fam-B}). Hence we can use \eqref{eqn:weak-GR} and obtain
\begin{align*}
\mvint_B G^B \,d\mu & = \mvint_B |w-w_{B}| \,d\mu \leq \varepsilon w_{\tau B}
 \leq \varepsilon \mvint_{\tau B} F \,d\mu +\varepsilon w_{B_0'}.
\end{align*}
Thus, (iii) of Theorem \ref{theor:good-lambda:X} holds with $\delta=\varepsilon$ and
$g^B=\varepsilon\, w_{B_0'}.$ In this case the maximal function
$G_{\Bs'}^*(x)\equiv\varepsilon\, w_{B_0'}$ if $x \in \cup_{\Bs'}B$ and
$G_{\Bs'}^*(x)=0$ otherwise. Therefore, we can invoke Theorem \ref{theor:good-lambda:X} and in particular \eqref{main2} leads to  the desired estimate \eqref{wrh}:
\begin{multline*}
\|w-w_{B_0'}\|_{L^{p},B_0'}
=
\|F\|_{L^{p},B_0'}
\lesssim
\|G_{\Bs'}^*\|_{L^{p},\widehat{B}_0'}
+
F_{\widehat{B}_0'}
\lesssim
\varepsilon\, w_{B_0'}
+
\mvint_{\widehat{B}_0'} |w-w_{\widehat{B}_0'}| \,d\mu
 \lesssim \varepsilon w_{\tau \widehat{B}_0'},
\end{multline*}
where in the last estimate we have used \eqref{eqn:weak-GR} with $\widehat{B}_0'$ in place of $B$ (note that by assumption  $\tau \widehat{B}_0'\subset \tau \widehat{B}_0$). The previous estimate holds if 
$1<p<p(\varepsilon)$ where
\begin{equation}
 p(\varepsilon)=\frac{\log(C_\mu^{-1}\,c_\mu\,\tau^D\,\varepsilon^{-1})}{\log (2\,c_\mu \max\{\tau,3\}^D)}
\qquad\mbox{and}\qquad
0<\varepsilon< \frac1{2\,C_\mu}\, \min\Big\{1, \frac{\tau}{3}\Big\}^D.
\label{ep-p(ep)}
\end{equation}
 Note that $p(\varepsilon)\to +\infty$ as $\varepsilon\to 0^+$.
\end{proof}

\section{Proof of Theorem \ref{theor:good-lambda:X}}\label{section:proof:good-lambda-X}

The proof of Theorem \ref{theor:good-lambda:X} combines ideas from the proof of Theorem \ref{theor:good-lambda:dyadic}
with a Calder\'on-Zygmund type covering in \cite[Lemma 4.4]{MaPe}. We start with two lemmas. In what follows we will use $F_B$ to denote the $\mu$-average of $F$ on $B$.

\begin{lemma}\label{lemma:cz1}
Given $\tau\ge 1$ we set
\begin{equation}\label{lambda0}
\lambda_0:=(15\tau)^D c_\mu\left(1+\frac{1}{\eta}\right)^D.
\end{equation}
Let  $0\le F\in L^1(\widehat{B}_0)$. If $B=B(x_B,r_B) \in \Bs$ \textup{(}cf. \eqref{def-B-cal}\textup{)} is such that $F_B\ge \lambda_0 F_{\widehat{B}_0}$,
then $r_B \leq \frac{\eta}{15\tau}\, r_{B_0}$ and, consequently, $15\tau B \in \Bs$.
\end{lemma}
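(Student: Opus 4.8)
The plan is to argue by contraposition on the radius: suppose $B = B(x_B, r_B) \in \Bs$ has $r_B > \frac{\eta}{15\tau}\, r_{B_0}$, and show that then $F_B < \lambda_0\, F_{\widehat{B}_0}$, which forces the stated conclusion $r_B \le \frac{\eta}{15\tau}\, r_{B_0}$. The key geometric observation is that since $x_B \in B_0$ and $r_B \le \eta\, r_{B_0}$ (because $B \in \Bs$), the ball $B$ is contained in $\widehat{B}_0 = (1+\eta)B_0$; indeed any point $y \in B$ satisfies $d(y, x_{B_0}) \le d(y, x_B) + d(x_B, x_{B_0}) < r_B + r_{B_0} \le (1+\eta)\, r_{B_0}$. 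So $B \subset \widehat{B}_0$ with $r_B \le (1+\eta)\, r_{B_0} = r_{\widehat{B}_0}$, and the doubling estimate \eqref{doub1} applies to the pair $B \subset \widehat{B}_0$.

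From \eqref{doub1} we get $\mu(\widehat{B}_0)/\mu(B) \le c_\mu \big(r_{\widehat{B}_0}/r_B\big)^D = c_\mu \big((1+\eta)\, r_{B_0}/r_B\big)^D$. Under the assumption $r_B > \frac{\eta}{15\tau}\, r_{B_0}$, i.e. $r_{B_0}/r_B < 15\tau/\eta$, this gives
\[
\frac{\mu(\widehat{B}_0)}{\mu(B)}
<
c_\mu\left(\frac{(1+\eta)\,15\tau}{\eta}\right)^D
=
c_\mu\,(15\tau)^D\left(1+\frac1\eta\right)^D
=
\lambda_0.
\]
Since $0 \le F \in L^1(\widehat{B}_0)$ and $B \subset \widehat{B}_0$, we have $\int_B F\,d\mu \le \int_{\widehat{B}_0} F\,d\mu$, so
\[
F_B
=
\frac1{\mu(B)}\int_B F\,d\mu
\le
\frac{\mu(\widehat{B}_0)}{\mu(B)}\cdot\frac1{\mu(\widehat{B}_0)}\int_{\widehat{B}_0} F\,d\mu
<
\lambda_0\, F_{\widehat{B}_0},
\]
which is the desired strict inequality, contradicting $F_B \ge \lambda_0\, F_{\widehat{B}_0}$. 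Hence $r_B \le \frac{\eta}{15\tau}\, r_{B_0}$.

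For the final assertion that $15\tau B \in \Bs$: the dilated ball $15\tau B$ has the same center $x_B \in B_0$, and its radius is $15\tau\, r_B \le 15\tau \cdot \frac{\eta}{15\tau}\, r_{B_0} = \eta\, r_{B_0}$, so it meets the defining conditions of $\Bs$ in \eqref{def-B-cal}. I do not anticipate a genuine obstacle here — the whole argument is a one-line application of \eqref{doub1} once the containment $B \subset \widehat{B}_0$ is noted; the only thing to be slightly careful about is tracking that the exponent $D$ and constant $c_\mu$ in \eqref{doub1} are exactly what appear in the definition \eqref{lambda0} of $\lambda_0$, and that the inequality $r_B \le (1+\eta) r_{B_0}$ (needed to invoke \eqref{doub1} in the stated direction) indeed holds, which it does since $\eta$ is small and $r_B \le \eta\, r_{B_0}$.
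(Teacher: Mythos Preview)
Your proof is correct and follows essentially the same route as the paper: both arguments use $B\subset\widehat{B}_0$, the nonnegativity of $F$, and the doubling estimate \eqref{doub1} to bound $F_B/F_{\widehat{B}_0}$ by $c_\mu\big((1+\eta)r_{B_0}/r_B\big)^D$ and then compare with $\lambda_0$. The only cosmetic difference is that you phrase it as a contraposition on the radius, while the paper writes the chain of inequalities directly and solves for $r_B$.
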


\begin{proof}
Note that
$$
\lambda_0
\le
\frac{F_B}{F_{\widehat{B}_0}}
\le
\frac{\mu(\widehat{B}_0)}{\mu(B)}
\le
c_\mu\,\left(\frac{r_{\widehat{B}_0}}{r_B}\right)^D
=
c_\mu\,(1+\eta)^D\,\left(\frac{r_{B_0}}{r_B}\right)^D.
$$
This and the definition of $\lambda_0$ gives as desired $r_B \leq \frac{\eta}{15\tau}\, r_{B_0}$ which implies $15\tau B \in \Bs$ by the definition of the family $\Bs$.
\end{proof}

\begin{lemma}\label{cz2}
Let  $0\le F\in L^1(\widehat{B}_0)$ and assume that
$$
\Omega_\lambda := \{x \in \widehat{B}_0: M_{\Bs}F(x) >\lambda\}
\neq\emptyset.
$$
If $\lambda\ge \lambda_0\,F_{\widehat{B}_0}$, where $\lambda_0$ is given in \eqref{lambda0},
there exists a countable family of pairwise disjoint balls
$\{B_i\}_i$ such that
\begin{enumerate}\itemsep=0.3cm
\item[(a)] $\bigcup_i B_i \subset \Omega_\lambda \subset \bigcup_i 5B_i$,

\item[(b)] $15\tau B_i \in \Bs$,

\item[(c)] $F_{B_i} > \lambda$ and

\item[(d)] $F_{\sigma B_i} \leq \lambda$ whenever
$\sigma \geq 2$ and $\sigma B_i  \in \Bs.$
\end{enumerate}
\end{lemma}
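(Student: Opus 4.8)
The plan is to construct $\{B_i\}_i$ by selecting balls of almost maximal radius and then extracting a pairwise disjoint subfamily via Vitali's covering theorem. Since $\Omega_\lambda\neq\emptyset$, for every $x\in\Omega_\lambda$ there is some $B\in\Bs$ with $x\in B$ and $F_B>\lambda$; put
$$
R_x:=\sup\{r_B:\ B\in\Bs,\ x\in B,\ F_B>\lambda\}.
$$
Because $\lambda\ge\lambda_0\,F_{\widehat{B}_0}$, any such ball $B$ satisfies $F_B>\lambda\ge\lambda_0\,F_{\widehat{B}_0}$, so Lemma \ref{lemma:cz1} gives $r_B\le\frac{\eta}{15\tau}\,r_{B_0}$ and $15\tau B\in\Bs$; in particular $R_x\le\frac{\eta}{15\tau}\,r_{B_0}<\infty$. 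For each $x\in\Omega_\lambda$ I would then fix a ball $B_x\in\Bs$ with $x\in B_x$, $F_{B_x}>\lambda$ and $r_{B_x}>R_x/2$.

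Next, apply Vitali's covering theorem to the family $\{B_x\}_{x\in\Omega_\lambda}$, whose radii are uniformly bounded by $\frac{\eta}{15\tau}\,r_{B_0}$ by the previous step. This yields a countable pairwise disjoint subfamily $\{B_i\}_i=\{B_{x_i}\}_i$ with $\bigcup_{x\in\Omega_\lambda}B_x\subset\bigcup_i 5B_i$. Since $x\in B_x$ for all $x$, we get $\Omega_\lambda\subset\bigcup_i 5B_i$; conversely, if $y\in B_i$ then $M_\Bs F(y)\ge F_{B_i}>\lambda$ because $B_i\in\Bs$, so $\bigcup_i B_i\subset\Omega_\lambda$. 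This proves (a), and (c) holds by the choice of $B_x$. For (b), apply Lemma \ref{lemma:cz1} once more, now to each $B_i$, using $F_{B_i}>\lambda\ge\lambda_0\,F_{\widehat{B}_0}$.

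It remains to verify (d). Suppose $\sigma\ge2$, $\sigma B_i\in\Bs$, and---arguing by contradiction---$F_{\sigma B_i}>\lambda$. Since $x_i\in B_i\subset\sigma B_i$, the ball $\sigma B_i$ is admissible in the supremum defining $R_{x_i}$, whence $\sigma\,r_{B_i}=r_{\sigma B_i}\le R_{x_i}<2\,r_{B_i}$, and this forces $\sigma<2$, a contradiction. Therefore $F_{\sigma B_i}\le\lambda$, which completes the proof.

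The argument is essentially routine. The only delicate point is the almost-maximal choice of radius in the selection step: it is exactly what produces property (d), and it must be made \emph{before} invoking the covering theorem rather than after. Note finally that the doubling hypothesis enters only through Lemma \ref{lemma:cz1}---to bound the radii of the competing balls and to guarantee $15\tau B_i\in\Bs$---while the covering theorem itself uses only the metric structure together with the uniform radius bound.
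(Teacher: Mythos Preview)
Your proof is correct and follows essentially the same approach as the paper: define the almost-maximal radius $r_x$ (your $R_x$), select a near-maximizing ball $B_x$ with $r_{B_x}>r_x/2$, and apply Vitali's covering theorem. The paper's proof is terser and simply asserts that the resulting family has the desired properties, whereas you spell out the verifications of (a)--(d); in particular your contradiction argument for (d) using $\sigma r_{B_i}\le R_{x_i}<2r_{B_i}$ is exactly the intended mechanism.
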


\begin{proof}
For every $x \in \Omega_\lambda$, we set
$$
r_x:=\sup \big\{ r_B: \exists\,B=B(x_B,r_B)\in\Bs, \ B\ni x\ \text{and}\ F_B >\lambda\big\}.
$$
By assumption, the set over which the supremum is taken
is non-empty. Moreover, by Lemma \ref{lemma:cz1}, we have $r_x \leq \frac{\eta}{15\tau}r_{B_0}$.
For each $x \in \Omega_\lambda$, we associate a ball $B_x\in\Bs$ with $B_x\ni x$ such
that $F_{B_x} >\lambda$ and $r_x/2 < r_{B_x} \leq r_x$. Applying Vitali's covering theorem,
we get a family of balls $\{B_i\}_i$ with the desired properties.
\end{proof}

\begin{proof}[Proof of Theorem \ref{theor:good-lambda:X}]
Let $K\ge 1$ be a large constant to be chosen and take $
\lambda \geq \lambda_0\, F_{\widehat{B}_0}$.
If $\Omega_\lambda\neq\emptyset$ we can apply the covering lemma to get the family of balls $\{B_i\}_i$ satisfying (a)--(d)
of Lemma \ref{cz2}. We begin by showing that
\begin{equation}\label{th1}
\{x \in 5B_i: M_\Bs F(x) >K\lambda\}=\{x \in 5B_i: M_\Bs(F\chi_{15B_i})(x)>K\lambda\}.
\end{equation}
Since the inclusion $\supset$ is clear, we take an arbitrary $x \in 5B_i$
and assume that $M_\Bs F(x) >K\lambda$. Then there is a ball $B \in \Bs$ with $B\ni x$
such that
\begin{equation}\label{th2}
\mvint_B F \,d\mu >K\lambda.
\end{equation}
We will be done after showing that $B\subset 15\,B_i$ which in turn follows from the fact that $r_B \leq 5\,r_{B_i}$.
In order to obtain the latter suppose otherwise that $r_B > 5\,r_{B_i}$. Then
$B \subset B(x_{B_i},3\,r_B)=\sigma\,B_i$ where $\sigma=3\,r_B/r_{B_i}>15$.
Note that $r_{\sigma\,B_i}=3\,r_B\le \eta\,r_{B_0}$ by \eqref{th2}, the choice of $\lambda$ and Lemma \ref{lemma:cz1}. Hence $\sigma\,B_i \in \Bs$
and we can use Lemma \ref{cz2} and \eqref{doub1} to conclude that
$$
\mvint_B F \,d\mu \leq \frac{\mu(\sigma\,B_i)}{\mu(B)}\,
\mvint_{\sigma\,B_i} F\,d\mu
\leq c_\mu
3^D \lambda \leq K\lambda,
$$
provided $K \geq c_\mu 3^D$. This contradicts \eqref{th2} and hence $r_B \leq 5\,r_{B_i}$. This implies in turn that $B\subset 15\,B_i$ and the proof of \eqref{th1} is complete.

We next consider
$$
E_\lambda := \{x \in \widehat{B}_0: M_{\Bs} F(x) >K\lambda,
G_{\Bs}^*(x) \leq \gamma \lambda\},
$$
where $\gamma\in (0,1)$. Then,
\begin{multline*}
\mu(E_\lambda)
=
\mu(E_\lambda\cap\Omega_{\lambda})
\leq
\sum_i \mu(E_\lambda \cap 5\,B_i)
\\
=
\sum_i
\mu(\{x \in 5\,B_i: M_\Bs(F\chi_{15\,B_i})(x)>K\lambda, G_{\Bs}^*(x) \leq \gamma \lambda\}).
\end{multline*}
In the previous sum it is understood that we keep only the terms where $E_\lambda \cap 5\,B_i\neq\emptyset$. In such a case, we pick $\bar{x}_i\in E_\lambda \cap 5\,B_i$. Recall that by Lemma \ref{cz2} part (b),  $15\,\tau B_i\in\Bs$ and hence $15\,B_i\in\Bs$.
Then using our assumptions (i) and (ii), and Lemma \ref{cz2}, for every $x \in E_\lambda \cap 3B_i$, we have
\begin{multline*}
K\lambda
<
M_\Bs(F\chi_{15B_i})(x)
\leq
M_\Bs(H^{15B_i}\chi_{15B_i})(x)+M_\Bs(G^{15B_i}\chi_{15B_i})(x)
\\
\leq
\Theta\mvint_{15\,\tau\, B_i} F \,d\mu +M_\Bs(G^{15B_i}\chi_{15B_i})(x)
\leq
\Theta\,\lambda +M_\Bs(G^{15B_i}\chi_{15B_i})(x).
\end{multline*}
Taking $K>\Theta$, we use the fact that $M_\Bs$ is of weak type $(1,1)$, assumption (iii) and Lemma \ref{cz2} to obtain \eqref{eqn:good-lambda:X}:
\begin{align*}
\mu(E_\lambda)
&
\leq
\sum_i \mu(\{x \in 5B_i: M_\Bs(G^{15B_i}\chi_{15B_i})(x)>
(K-\Theta)\lambda, G_{\Bs}^*(x) \leq \gamma \lambda\})
\\
&
\lesssim
\sum_i \frac{\mu(15B_i)}{(K-\Theta)\lambda} \mvint_{15\,B_i} G^{15B_i} \,d\mu
\\
& \lesssim
 \sum_i \frac{\mu(B_i)}{(K-\Theta)\lambda} \left(
\delta \mvint_{15\,\tau B_i} F \,d\mu + g^{15B_i}\right),
\\
&
 \le
 \sum_i \frac{\mu(B_i)}{(K-\Theta)\lambda} \left(
\delta \lambda + G_{\Bs}^*(\bar{x}_i)\right)
 \lesssim
\frac{\delta+\gamma}{K-\Theta}
\mu(\Omega_\lambda).
\end{align*}

We next obtain \eqref{main1} and \eqref{main2}. Observe first that \eqref{eqn:good-lambda:X} gives
for every $\lambda \geq \lambda_0 F_{\widehat{B}_0}$:
\begin{multline*}
\mu(\Omega_{K\lambda})
\leq
\mu(E_\lambda)+\mu(\{x \in \widehat{B}_0:G_{\Bs}^*(x) >\gamma \lambda\})
\\
\leq
C_\mu\frac{\delta+\gamma}{K-\Theta} \mu(\Omega_\lambda)
+\mu(\{x \in \widehat{B}_0:G_{\Bs}^*(x) >\gamma \lambda\}),
\end{multline*}
and hence, for all $\lambda >0$,
\begin{align}\label{dee}
\frac{\mu(\Omega_{K\lambda})}{\mu(\widehat{B}_0)}
\leq
C_\mu\frac{\delta+\gamma}{K-\Theta} \frac{\mu(\Omega_\lambda)}{\mu(\widehat{B}_0)}
+\frac{\mu(\{x \in \widehat{B}_0:G_{\Bs}^*(x) >\gamma \lambda\})}{\mu(\widehat{B}_0)}
 +
\chi_{\{0<\lambda< \lambda_0 F_{\widehat{B}_0}\} }(\lambda).
%\left(\frac{\lambda_0  f_{\widehat{B}_0}}{\lambda}\right)^p
\end{align}
We now proceed as in the dyadic case. Choose $K=2\,\max\{\Theta, c_\mu\,3^D\}$ and
assume that $1<p<\frac{\log(1/(C_\mu\,\delta\, \Theta^{-1}))}{\log K}$. Using \eqref{dee} it follows that
\begin{multline*}
I_N
:
=\sup_{0<\lambda \leq N} \lambda^p \frac{\mu(\Omega_\lambda)}{\mu(
\widehat{B}_0)}
=
K^p\sup_{0<\lambda \leq \tfrac{N}{K}} \lambda^p \frac{\mu(\Omega_{K\lambda})}
{\mu(\widehat{B}_0)}
\\
\leq
K^p\,C_\mu\frac{\delta+\gamma}{\Theta}\,I_N
+\frac{K^p}{\gamma^p}\,\|G_{\Bs}^*\|_{L^{p,\infty}, \widehat{B}_0}^p
+
K^p\,(\lambda_0  F_{\widehat{B}_0})^p.
\end{multline*}
Let us observe  that our choice of $p$ guarantees that 
$K^p\,C_\mu\frac{\delta}{\Theta} <1$ and hence we can take $\gamma$ small enough so that $K^p\,C_\mu\frac{\delta+\gamma}{\Theta}<1$. This and the fact that $I_N\le N^p<\infty$ allow us to absorb the first term in the last estimate to obtain
$$
I_N
\le
C_{\Theta,\delta, \tau,\eta,p}\,\big(\|G_{\Bs}^*\|_{L^{p,\infty}, \widehat{B}_0}^p
+(F_{\widehat{B}_0})^p\big).
$$
Finally, \eqref{main1} follows by letting $N\to\infty$ and using that $F\le M_{\Bs}F$ $\mu$-a.e.~on $B_0$ as observed above.

The proof of \eqref{main2} follows the same ideas already employed in the dyadic case and it is therefore omitted.
\end{proof}


\begin{thebibliography}{111}
\parskip=0.1cm

\bibitem{AB}
D. Aalto and L. Berkovits.
\newblock Asymptotical stability of Muckenhoupt weights through Gurov-Reshetnyak
classes.
\newblock {\em Trans. Amer. Math. Soc.} {\bf 364} (2012), 6671--6687.


\bibitem{ABKY}
D. Aalto, L. Berkovits, O. E. Kansanen and H. Yue.
\newblock John-Nirenberg lemmas for a doubling measure.
\newblock {\em Studia Math.} {\bf 204} (1) (2011), 21--37.


\bibitem{AM}
P. Auscher and J. M. Martell.
\newblock Weighted norm inequalities, off-diagonal estimates and elliptic
  operators. {I}. {G}eneral operator theory and weights.
\newblock {\em Adv. Math.} {\bf 212} (1) (2007), 225--276.


\bibitem{BeM}
F. Bernicot and J.M. Martell.
\newblock Self-improving properties for abstract Poincar\'e type inequalities.
\newblock {\em Trans. Amer. Math. Soc.} {\bf  367} (7) (2015), 4793--4835.


\bibitem{Boja1}
B. Bojarski.
\newblock Remarks on the stability of reverse H\"older inequalities and quasi-conformal
mappings.
\newblock {\em Ann. Acad. Sci. Fenn. Ser. A I Math.} {\bf 10} (1985), 89--94.

\bibitem{Boja2}
B. Bojarski.
\newblock On Gurov-Reshetnyak classes.
\newblock Bounded Mean Oscillation in complex analysis, University of Joensuu, Publications
in sciences, no. 14, p. 21--42, Joensuu, 1989.


\bibitem{Camp}
S. Campanato.
\newblock Su un teorema di interpolazione di G. Stampacchia.
\newblock {\em Ann. Sc. Norm. Sup. Pisa} \textbf{iii) 20} (1966), 649--652.


 \bibitem{CoWe}
R. Coifman and G. Weiss.
\newblock {\em Analyse harmonique non-commutatives surcertains espaces homeg\`enes.}
\newblock Springer-Verlag, Heidelberg, 1971.

 \bibitem{FPW}
B. Franchi, C. P\'erez and R. L. Wheeden,
\newblock Self-improving properties of John-Nirenberg and Poincar\'e
inequalities on spaces of homogeneous type.
\newblock {\em J. Funct. Anal.} {\bf 153} (1) (1998), 108--146.

\bibitem{GiMa}
M. Giaquinta and L. Martinazzi.
\newblock {\em An Introduction to the Regularity Theory for
Elliptic Systems, Harmonic Maps and Minimal Graphs.}
\newblock Edizioni della Normale, Pisa, 2005.


\bibitem{Gius}
E. Giusti.
\newblock {\em Direct methods in the calculus of variations}.
\newblock World Scientific Publishing Co. Inc., River Edge, NJ, 2003.

\bibitem{Guro1}
L. G. Gurov.
\newblock The stability of Lorentz transformations. Estimates for the derivatives. (Russian)
\newblock {\em Dokl. Akad. Nauk SSSR} {\bf 220} (1975), 273--276.

\bibitem{JoNi}
F. John and L. Nirenberg.
\newblock On functions of bounded mean oscillation.
\newblock {\em Comm. Pure Appl. Math.} {\bf 14} (1961), 415--426.


%\bibitem{GaRu}
%J. Garc\'ia-Cuerva and J. L. Rubio
%de Francia.
%\newblock \emph{Weighted Norm Inequalities and Related Topics}.
%\newblock North Holland, Amsterdam, 1985.


\bibitem{GuRe}
L. G. Gurov and Yu.G. Reshetnyak.
\newblock A certain analogue of the concept of a function with
bounded mean oscillation. (Russian)
\newblock {\em Sibirsk. Mat. Z.} {\bf 17} (1976), 540--546.

\bibitem{Hein01}
J. Heinonen.
\newblock{\em Lectures on analysis on metric spaces}, Springer, 2001.

\bibitem{Isan1}
D. V. Isangulova.
\newblock The class of mappings with bounded specific oscillation, and integrability
of mappings with bounded distortion on carnot groups.
\newblock {\em Siber. Math. J.} {\bf 48} (2) (2007) 249--267.

\bibitem{Isan2}
D. V. Isangulova.
\newblock Local stability of mappings with bounded distortion on Heisenberg groups.
\newblock {\em Siber. Math. J.} {\bf 48} (6) (2007) 984--997.


\bibitem{Iw1}
T. Iwaniec.
\newblock On $L^p$-integrability in PDE's and quasiregular mappings
for large exponents.
\newblock {\ Ann. Acad. Sci. Fenn. Ser. A I Math.} {\bf 7} (1982), 301--322.


\bibitem{KoLeSt}
A. A. Korenovskyy, A. K. Lerner and A. M. Stokolos.
\newblock A note on the Gurov-Reshetnyak condition.
\newblock {\em Math. Res. Lett.} {\bf 9} (5-6) (2002), 579--585.


%\bibitem{KoMa}
%R. Korte and O. E. Kansanen
%\newblock Strong $A_\infty$-weights are $A_\infty$-weights on metric spaces.
%\newblock {\em Rev. Math. Iberoam.} {\bf 27} (2011) (1), 335-354.

\bibitem{LePe}
A. Lerner and C. P\'erez.
\newblock Self-improving properties of generalized Poincar\'e type
inequalities throught rearrangements.
\newblock {\em Math. Scand.} {\bf 97} (2) (2005), 217--234.



\bibitem{MaPe}
P. MacManus and C. P\'erez.
\newblock Generalized Poincar\'e inequalities: sharp self-improving properties.
\newblock {\em Internat. Math. Res. Notices.} {\bf 2} (1998), 101--116.



%\bibitem{OrPe}
%J. Orobitg and C. P\'erez.
%\newblock $A_p$ weights for nondoubling measures in $\Rn$ and applications.
%\newblock {\em Trans Amer. Math. Soc.} {\bf 354} (5) (2002), 2013-2033.



\bibitem{Re1}
Yu. G. Reshetnyak.
\newblock Stability estimates in Liouville's theorem, and
the Lp-integrability of the derivatives of quasi-conformal mappings.
\newblock {\em Siberian Math. J.} {\bf 17} (1976), 653--674.

\bibitem{Re2}
Yu. G. Reshetnyak.
\newblock {\em Stability Theorems in Geometry and Analysis.}
\newblock Springer, 1994.


\bibitem{StTo}
J.-O. Str\"omberg and A. Torchinsky.
\newblock {\em Weighted Hardy Spaces}, vol. 1381 of Lecture Notes in Mathematics.
\newblock Springer-Verlag, 1989.

%\bibitem{Torc}
%A. Torchinsky.
%\newblock {\em Real-variable methods in harmonic analysis}, volume 123 of {\em
%  Pure and Applied Mathematics}.
%\newblock Academic Press Inc., Orlando, FL, 1986.

\end{thebibliography}
\end{document}